\DeclareUrlCommand\email{\urlstyle{rm}}
\renewenvironment{quote}[1]%
{\begin{quoting}#1}
{\end{quoting}}
\renewenvironment{quotation}[1]%
{\begin{quoting}#1}
{\end{quoting}}
\renewcommand\deffootnote[4][]{%
  \long\def\@makefntext##1{%
    \setlength{\@tempdimc}{#3}%
    \def\@tempa{#1}\ifx\@tempa\@empty
      \@setpar{\@@par
        \@tempdima = \hsize
        \addtolength{\@tempdima}{-#2}%
        \parshape \@ne #2 \@tempdima}%
    \else
      \addtolength{\@tempdimc}{#2}%
      \addtolength{\@tempdimc}{-#1}%
      \@setpar{\@@par
        \@tempdima = \hsize
        \addtolength{\@tempdima}{-#1}%
        \@tempdimb = \hsize
        \addtolength{\@tempdimb}{-#2}%
        \parshape \tw@ #1 \@tempdima #2 \@tempdimb
      }%
    \fi
    \par
    \parindent\@tempdimc\noindent
    \ftn@font\hbox to \z@{\hss\@@makefnmark}\usebox\strutbox##1
  }%
  \def\@@makefnmark{\hbox{\ftnm@font{#4}}}%
}%
\newcommand*\GetTextWidth[3][\normalfont\normalsize]{{#1%
    \settowidth{#2}{abcdefghijklmnopqrstuvwxyz}%
    \setlength{#2}{0.03193#2}%
    \addtolength{#2}{0.44961pt}%
    \setlength{#2}{#3#2}%
    \global#2=#2}}
\newlength\bringhurstwdt
\GetTextWidth{\bringhurstwdt}{72}
\newcommand{\dutch}[1]{\foreignlanguage{dutch}{#1}}
\newcommand{\english}[1]{\foreignlanguage{english}{#1}}
\newcommand{\french}[1]{\foreignlanguage{french}{#1}}
\newcommand{\german}[1]{\foreignlanguage{german}{#1}}
\newcommand{\italian}[1]{\foreignlanguage{italian}{#1}}
\newcommand{\weg}[1]{}
\newcommand{\klo}{\mathrel{<\mkern-9mu\raisebox{0.6pt}{\scalebox{0.7}{$\circ$}}}}
\newcommand{\gro}{\mathrel{\raisebox{0.6pt}{\scalebox{0.7}{$\circ$}}\mkern-9mu>}}
\newcommand{\apart}{\ensuremath\mathrel{\#}}
\newcommand{\numberset}[1]{\mathbb{#1}} 
\newcommand{\numnat}{\numberset{N}}
\newcommand{\numrat}{\numberset{Q}}
\newcommand{\numreal}{\numberset{R}}
\newcommand{\st}{\ensuremath{\text{standard}}}
\newcommand{\forallst}{\forall^{\,\text{st}}}
\newcommand{\forallstfin}{\forall^{\,\text{st\,fin}}}
\newcommand{\forallstinf}{\forall^{\,\text{st\,inf}}}
\newcommand{\existsst}{\exists^{\,\text{st}}}
\newcommand{\existsstfin}{\exists^{\,\text{st\,fin}}}
\newcommand{\breakingcomma}{%
  \begingroup\lccode `~=`,
 \lowercase{\endgroup\expandafter\def\expandafter~\expandafter{~\penalty0 }}}
\newcommand{\formatabr}[1]{{\MakeUppercase{#1}}}
\newcommand{\cnrs}{\formatabr{cnrs}}
\newcommand{\iv}{\formatabr{iv}}
\newcommand{\snd}{\formatabr{snd}}
\newcommand*\footnotemarkspace{1.5em} 
\g@addto@macro \normalsize {%
\setlength{\abovedisplayskip}{\baselineskip}%
\setlength{\belowdisplayskip}{\baselineskip}%
\setlength{\abovedisplayshortskip}{\baselineskip}%
\setlength{\belowdisplayshortskip}{\baselineskip}%
\setlength{\jot}{0pt}
}
\renewcommand{\newblock}{ }
\DeclareOldFontCommand{\bf}{\normalfont\bfseries}{\mathbf}
\newtheorem{theorem}{Theorem}
\newtheorem{lemma}[theorem]{Lemma}
\newtheorem{corollary}[theorem]{Corollary}
\newtheorem{pargmt}[theorem]{Plausibility argument}
\newtheorem{proof}[theorem]{Proof}
\newtheorem{wce}[theorem]{Weak counterexample}
\begin{document}

\setlength{\RaggedRightParindent}{1.5em}
\RaggedRight

{\Large
\begin{center}
Weyl and intuitionistic infinitesimals\end{center}
\par}

{\renewcommand*{\thefootnote}{\fnsymbol{footnote}}

\begin{center}
Mark van Atten$^*$\footnotetext[1]{\snd{} (\cnrs{}\,/\,Paris \iv{}),
1 rue Victor Cousin,
75005 Paris,
France. 
\mbox{vanattenmark@gmail.com}.}\\
\today
\end{center}
}
\setcounter{footnote}{0}

\begin{center}
dedicated to the memory of Richard Tieszen, 1951–2017
\end{center}

\deffootnote{\footnotemarkspace}
  {\parindent}%
  {\makebox[\footnotemarkspace][r]{{\thefootnotemark.\ }}} 

\begin{abstract}
\noindent As Weyl was interested in infinitesimal analysis and
for some years embraced Brouwer’s intuitionism,
which he continued to see as an ideal
even after he had 
convinced himself that
it is a practical necessity for science
to go beyond intuitionistic mathematics,
this note presents some remarks on
infinitesimals
from a Brouwerian perspective.
After an introduction and a look at Robinson’s and
Nelson’s approaches to classical nonstandard analysis,
three desiderata 
for an intuitionistic construction of infinitesimals 
are extracted from Brouwer’s writings.
These cannot be met,
but in explicitly Brouwerian settings
what might in different ways be called approximations to infinitesimals
have been developed by
early Brouwer,
Vesley,
and Reeb.
I conclude that perhaps Reeb’s approach,
with its Brouwerian motivation for accepting
Nelson’s classical formalism, 
would
have suited Weyl best.
\end{abstract}

\section{Introduction}
In the 1920s,
Weyl
doubted
that non-Archimedean number systems
could serve to develop 
real analysis with infinitesimals.%
\footnote{There are similar contemporary statements 
in e.g.~\citealt[116–117]{Fraenkel1929}
and 
\citealt[208]{Skolem1929}.
There are many studies of the history of
infinitesimals,
e.g.,
for the early history,
\citealt{Baron1969,Mancosu1999,Schubring2005}.
See
\citealt{FletcherHrbacekKanoveiKatzLobrySanders2017}
for an overview of later approaches.}
In his
‘\textit{Philosophy of Mathematics and Natural Science}’ of 1926,
Weyl advanced the following argument:
\begin{quotation}
But once the limit concept has been grasped,
it is seen to render the infinitely small superfluous.
Infinitesimal analysis proposes to draw conclusions
by integration from the behavior in the infinitely small,
which is governed by elementary laws,
to the behaviour in the large …
If the infinitely small is not interpreted ‘potentially’
here,
in the sense of the limiting process,
then the one has nothing to do with the other,
the processes in infinitesimal and in finite dimensions
become independent of each other,
the tie which binds them together is cut.
Here Eudoxus undoubtedly saw right
[in stating what became known as the Archimedean axiom].
(\citealt[35–36]{Weyl1926}/\citealt[44–45]{Weyl1949}.)
\end{quotation}

In the 1960s Abraham Robinson showed how to handle this fully
in ZFC and classical model theory,
introducing non-standard analysis
with its transfer principle.
This encouraged Edward Nelson
to develop
an 
approach likewise based on ZFC
but
axiomatic.
Constructive versions
of their theories have also been developed
(yielding,
as expected,
in general somewhat weaker principles).
But
Weyl for some years embraced Brouwer’s intuitionism,
and continued to see it as a philosophical ideal
even after he had 
convinced himself that
it is a practical necessity for science
to go beyond intuitionistic mathematics
and adopt a formalist attitude.
As Hans Freudenthal wrote in his obituary of his friend Weyl,  
\begin{quote}
He decides in favour of Brouwer’s intuitionistic explanation of mathematics,
but,
averse to system-building,
he spurns Brouwer’s method that aims for generality.
That is no treason,
then the essence of his mathematics was and remained intuitionistic.
(\citealt{Freudenthal1955},
as quoted in
\citealt[p.880n44, trl.~mine]{Dalen2005})
\end{quote}

After a brief look at
classical approaches
and
constructivisations of them,
I will
discuss
three attempts at introducing infinitesimals that were proposed as
not only constructive but,
in different ways,  
explicitly Brouwerian:
one by early Brouwer,
one by Vesley based on later Brouwer,
and one by Reeb.

These will not be compared with more recent
constructive versions of formalised nonstandard analysis as to their mathematical merits.%
\footnote{%
\label{constructivepapers}
See, e.g.,
\citealt{Palmgren1995},
\citealt{Berg.Briseid.Safarik2012},
\citealt{Ferreira.Gaspar2015},
\citealt{Sanders2018},
\citealt{Berg.Sanders2017},
and
\citealt{Dinis.Gaspar2018}.
Such approaches are of particular interest for ‘proof mining’,
which is the extraction of explicit information from proofs
(e.g., explicit bounds for existential quantifiers,
or rates of convergence).}
They are discussed
primarily because  they stay close to Weyl’s own interest
in both Brouwerian intuitionism and infinitesimals,
but to some extent also for their own sake,
as they remain among the lesser known.
It would be very interesting to see a further mathematical development
of Vesley’s approach,
as it is,
by design,
fully integrated in Brouwerian analysis with choice sequences;
and Reeb’s
Brouwerian look at Nelson’s classical nonstandard analysis
is of considerable philosophical interest,
and would have been so,
I suspect,
to Weyl.

\section{Classical and constructive approaches}%
\label{SectionClassicalConstructive}Robinson’s approach is
model-theoretical.
It exploits the fact that
a theory may have non-isomorphic models
to distinguish which would require
a stronger logic.
That Peano Arithmetic with first-order logic has a model that is not
isomorphic to the intended one was shown,
by classical means, 
in \citealt{Skolem1934}.
Robinson established the same 
for the theory of analysis with first-order logic.
The field of real numbers \(\numreal\) has an Archimedean order:
\begin{equation*}
\label{ArchOrder}
\forall x,y\in\numreal(x < y \rightarrow 
\exists n\in\numnat(n \cdot x > y))
\end{equation*}
The non-standard infinite elements
in Robinson’s non-standard model
give rise to a
non-Archimedean order,
and he proceeded in such a way
that
the larger structure
is again a field,
\(\numreal^*\).
By definition,
a field
\(F\)
contains,
for each of its elements except 0,
also its 
multiplicative inverse:
\begin{equation*}
\label{MultInv}
\forall x\in F
(x \neq 0 \rightarrow 
\exists y\in F
(x \cdot y = 1))
\end{equation*}
The multiplicative inverses
of the infinitely large elements in
$\numreal^*$ 
are infinitesimals.

Robinson’s method enabled him to show
\begin{equation*}
\numreal^* \vDash \phi \Leftrightarrow \numreal \vDash \phi
\label{TP}
\end{equation*}
for 
\(\phi\)
limited to first-order formulas.
First-order logic is not strong enough to distinguish between
non-isomorphic models of the theory of real numbers.
The equivalence~\eqref{TP}
is called the Transfer Principle
because it states
that,
for first-order
$\phi$,
truth in $\numreal^*$
transfers to
truth in $\numreal$,
and vice versa.

Robinson observed 
that
the philosophical importance of the Transfer Principle
is that,
within the limitation to stay with first-order logic,
it is a mathematical rendering of Leibniz’s Continuity Principle: 
\begin{quotation}
G.W.~Leibniz argued that the theory of infinitesimals
implies the introduction of ideal numbers which might be
infinitely small or infinitely large
compared with the real numbers
but which were
\emph{to possess the same properties as the latter}.%
\footnote{[Note MvA: An example of Leibniz’ saying this is found in his well-known letter to Varignon of February 2, 1702
\citep[pp.93-94]{Leibniz1859}.]}

However,
neither he nor his disciples and successors were able
to give a rational development leading up to a system of this sort.
As a result,
the theory of infinitesimals gradually fell into disrepute
and was replaced eventually by the classical theory of limits.

It is shown in this book that Leibniz’ ideas can be fully vindicated
and that they lead to a novel and fruitful approach to classical
Analysis and to many other branches of mathematics.
\citep[p.2, original emphasis]{Robinson1966}
\end{quotation}

A 
largely 
constructive,
version of non-standard analysis had been
developed by Schmieden and Laugwitz 
\citeyearpar{Schmieden.Laugwitz1958}
even before Robinson’s classical work.
Their non-standard objects can be considered to be constructive
–~they are infinite sequences of arbitrary rational numbers~–
but classical reasoning is used to reason about
their properties.
Also  their transfer principle was  limited;
their 
\(\numreal^*\) 
was not a field but only
a partially ordered ring.
Inspired by Schmieden and Laugwitz,
a fully constructive counterpart to Robinson’s work
has been
developed by Erik Palmgren.
It turns out that
full Transfer would require a change in the logic:
the following is the argument presented in
\citealt[p.234]{Palmgren1998}.

\begin{theorem}[Moerdijk and Palmgren]
A full constructivisation of Robinson’s Transfer Principle
demands
a nonstandard interpretation of the
logical symbols.
\end{theorem}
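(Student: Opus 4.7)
The plan is to argue by contradiction that full Transfer under the standard intuitionistic reading of the quantifiers is incompatible with the existence of any positive infinitesimal in $\numreal^*$. I would begin by isolating a first-order formula $\phi$ that, first, is constructively provable in $\numreal$ and, second, whose Transfer to $\numreal^*$, read intuitionistically, forbids infinitesimals. The Archimedean property
\[
\phi \;\equiv\; \forall x\,\bigl(x > 0 \rightarrow \exists n \in \numnat\,(n\cdot x > 1)\bigr)
\]
is the natural candidate: it is constructively valid in the standard constructive reals, being essentially built into the Cauchy-sequence or located-Dedekind definition of $\numreal$.

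Next, assuming full Transfer and reading the existential quantifier $\exists n \in \numnat$ in the usual constructive manner, $\phi$ would be true in $\numreal^*$. Specialising to a purported positive infinitesimal $\epsilon \in \numreal^*$ — that is, $\epsilon > 0$ with $n\epsilon < 1$ for every standard $n$ — would produce, constructively, an actual standard natural $n$ with $n\epsilon > 1$. This contradicts the defining property of $\epsilon$, so either $\numreal^*$ has no positive infinitesimals, vacating the theory, or the interpretation of $\exists n \in \numnat$ after Transfer cannot be the usual constructive one.

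Having localised the problem, I would close by pointing to the Nelsonian remedy. If the logical symbols may be reinterpreted, splitting $\forall$ and $\exists$ into standard and unrestricted variants, then the Archimedean property can transfer in a form such as $\forallst x\,(x > 0 \rightarrow \existsst n\,(nx > 1))$, which is compatible with the existence of \emph{nonstandard} large $n$ witnessing $n\epsilon > 1$ for an infinitesimal $\epsilon$. Hence Transfer cannot be fully constructivised without some such reinterpretation of the logical symbols, which is precisely the conclusion of the theorem.

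The main obstacle is framework-independence: different constructive nonstandard set-ups (Schmieden–Laugwitz style, Palmgren's own sheaf-theoretic construction, etc.) treat infinitesimals and the strict order on $\numreal^*$ somewhat differently, and the argument must not implicitly smuggle in classical properties of the order. Provided $<$ on $\numreal^*$ behaves as expected — in particular that $n\epsilon > 1$ and $n\epsilon < 1$ cannot both hold for the same standard $n$ — the argument above goes through in any reasonable constructive setting, yielding the theorem.
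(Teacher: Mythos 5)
There is a genuine gap at the heart of your second paragraph. When the Archimedean property is transferred to $\numreal^*$, the quantifier $\exists n \in \numnat$ is interpreted in the nonstandard structure, i.e.\ it ranges over $\numnat^*$, not over the standard naturals; that relativisation is part of the ordinary semantics of Robinson's Transfer, classically and constructively alike. A constructive (BHK) reading of that existential therefore yields a witness in $\numnat^*$, which may perfectly well be an infinite element; for an infinitesimal $\epsilon$ that is exactly what happens, and no contradiction with ``$n\epsilon < 1$ for every standard $n$'' follows --- not least because one cannot constructively decide of a given element of $\numnat^*$ whether it is standard. So the claim that constructivity forces the witness to be standard is unjustified, and with it the whole reductio. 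Your closing paragraph in effect concedes this: letting a nonstandard $n$ witness the transferred statement is not a reinterpretation of the logical symbols but just what Transfer already says. What Moerdijk and Palmgren mean by a nonstandard interpretation of the logical symbols is something stronger (e.g.\ sheaf semantics in which disjunction and existence are interpreted locally), and your argument generates no pressure towards it.

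The paper's proof runs along a different and sharper line: take an undecided sentence $\forall x P(x)$ with $P$ free of unbounded quantifiers (Goldbach), note that $\numnat \vDash \forall m(\forall n<m\,P(n) \vee \exists n<m\,\neg P(n))$ is constructively provable since each bounded instance is decidable, transfer this to $\numnat^*$, instantiate $m$ at an infinite element, and observe that the constructive reading of the resulting disjunction, transferred back, would yield $\numnat \vDash \forall x P(x) \vee \neg\forall x P(x)$ --- deciding Goldbach. That is a genuine constructive impossibility (a weak counterexample), whereas ``an infinitesimal has a nonstandard Archimedean witness'' is not. To salvage your approach you would need a transferred statement whose constructive witness demonstrably cannot be absorbed into the nonstandard part; the bounded-disjunction device is precisely what achieves this.
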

\begin{proof}
Assume that
$\forall x P(x)$
is an as yet undecided formula in arithmetic,
where 
$P$
contains no unbounded quantifiers.
(Goldbach’s conjecture is of this form.)
We can decide all instances
up to any given bound,
so
\begin{equation*}
\numnat \vDash 
\forall m(\forall n<m P(n) \vee \exists n<m \neg P(n))
\end{equation*}
Suppose now we have a Transfer Principle:
\begin{equation*}
\label{ArithmeticTransfer}
\numnat^* \vDash \phi \Leftrightarrow \mathbb{N} \vDash \phi
\end{equation*}
Let 
$m \in \numnat^*$
be infinite.
By transfer from
$\numnat$
to
$\numnat^*$,
either for all 
$n$ we have
$\numnat^* \vDash P(n*)$,
where
$n*$
is the image of 
$n\in \numnat$ 
in
$\numnat^*$,
or
$\numnat^* \vDash \exists n<m \neg P(n)$.
Transfer back from 
$\numnat^*$
to
$\numnat$
yields
$\numnat \vDash 
\forall x P(x) \vee \neg\forall x P(x)$.
This contradicts our assumption.
\end{proof}

Inspired by Robinson’s theory,
but wishing to reconstruct it in syntactic
rather than model-theoretical terms,
Edward Nelson proposed
Internal Set Theory (IST)
\citep{Nelson1977}.%
\footnote{For an introduction to Nelson’s approach, 
see his more explanatory first chapter of a projected
book \citealt{Nelson2002} and \citealt{Robert1988}.}
Another syntactic approach had
been invented just before him
by Hrbaček,
and closely related
work 
(but incompatible with ZFC)
by Vopenka had begun even earlier;
for the historical details,
I refer to the rich footnote 7
in 
\citealt[p.vii]{Kanovei.Reeken2004}.
I will here look at IST,
and in some detail, 
not so much because
it is the best known and most used syntactical nonstandard analysis,
but 
because it was
part of Reeb’s Brouwerian approach
that we will see in section~\ref{SectionReeb}.
It should be mentioned,
however,
that further development of
both Nelson’s and Hrbaček’s
work
have led to theories with
more attractive metamathematical properties.%
\footnote{The culmination of this is
HST
in 
\citealt{Kanovei.Reeken2004};
but it does not contain full ZFC as a proper part.
See also footnote~\ref{istnotunique},
below.}

Instead of enriching the ontology by adding nonstandard objects to the set of classical real numbers,
Nelson  enriches the language we have to talk about the latter.%
\footnote{It is, 
in fact, 
possible to look at Robinson’s nonstandard analysis in an entirely
formalistic way, 
and take it not to introduce new objects,
but new ways of deducing theorems.
Robinson points this out at the very end of his book:
\begin{quote}
Returning now to the theory of this book,
we observe that it is presented,
naturally,
within the framework of contemporary Mathematics,
and thus appears to affirm the existence of all sorts of
infinitary entities.
However,
from a formalist point of view we may look at our theory
syntactically and may consider that what we have done is to
introduce new deductive procedures rather than new
mathematical entities.
\citep[p.282]{Robinson1966}
\end{quote}
I have not highlighted this in the main text,
so as to be able to show the contrast between the model-theoretical and the syntactical approaches.}
The idea is that,
from a formal point of view,
a distinction between standard and nonstandard numbers
useful for the development of analysis
can already be made within the set of classical real numbers;
what matters is that this is done in such a way that the
right formulas become provable.

Nelson adds an undefined predicate
‘\st’ 
to the language of ZFC
and adds three axioms to the theory
that regulate its use;
just as in ZFC,
the relation
\(\in\)
is undefined.
As the phrase goes, 
‘its meaning is implicitly defined by the axioms’;
but
course that is not a specification of a meaning in the sense
of presenting a construction method or a meaning explanation in the
sense of, for example, Dummett and Martin-Löf.

In the axioms for
‘\st’,
the following shorthand is used:%
\footnote{The predicates ‘finite’ and ‘infinite’ are defined as usual,
in terms of the presence or absence of a bijection between
\(x\)
and 
the set
\(\{m \mid m < n\}\)
for some
\(n \in \numnat\).}
\begin{align*}
\existsst x \phi(x) & \quad \text{ for } \quad \exists x(x \ \st \wedge \phi(x))\\
\forallst x \phi(x) & \quad \text{ for } \quad \forall x(x \ \st \rightarrow  \phi(x))\\
\forallstfin x \phi(x) & \quad \text{ for } \quad \forall x((x \ \st  \wedge  x \text{ finite})\rightarrow \phi(x))\\
\forallstinf x \phi(x) & \quad \text{ for } \quad \forall x((x \ \st  \wedge  x \text{ infinite}) \rightarrow \phi(x))
\end{align*}
Then the new axioms
are introduced,
formally,
and without pausing to motivate them.
Formulas not containing the predicate ‘\st’ are 
said to be internal
(namely, to ZFC),
those containing it external:
\begin{quotation}
The axioms of IST are the axioms of ZFC together with three additional axiom schemes which we call the transfer principle (T), 
the principle of idealization (I), 
and the principle of standardization (S). 
They are as follows.

Let 
\(A(x, t_1,\dots, t_k)\)
be an internal formula with free variables 
\(x, t_1,\dots, t_k\)
and no other free variables. Then
\[ 
\forallst t_1\dots \forallst t_k(\forallst x A(x, t_1,\dots, t_k)
\rightarrow
\forall x A(x, t_1,\dots, t_k)) \tag{T}
\]
Let 
\(B(x,y)\) 
be an internal formula with free variables 
\(x,y\) 
and possibly other free variables. Then
\[ 
\forallstfin z\exists x\forall y \in z B(x,y) 
\leftrightarrow
\exists x\forallst y B(x,y).
\tag{I}
\]
Finally, let 
\(C(z)\) 
be a formula, internal or external, with free variable z and possibly other free variables. Then
\[ 
\forallst x\existsst y\forallst z (z \in y \leftrightarrow z \in x \wedge C(z)).
\tag{S}
\]
\citep[p.1166]{Nelson1977} 
\end{quotation}

We may not use external predicates to define subsets,
as the axioms of ZFC that would have to be used to prove the existence of these
subsets do not know how to interact with the undefined predicate ‘\st’.
(It is for this reason that not all of Robinson’s nonstandard analysis
can be reconstructed in IST.)
It is the role of the
standardization axiom (S)
to form standard subsets of standard sets.
Note that standard sets may well contain nonstandard elements;
we will see that
\(\numnat\)
does,
and in a sense that is the whole point of IST.

As IST is not an ontological enrichment of 
\(\numreal\),
Weyl’s question
how the dimensions of the finite and the infinitesimal
are related
for IST does not point to a problem with an ontological aspect.

William Powell proved,
by model-theoretical means, 
that IST is conservative over ZFC
and hence consistent
relative to ZFC
\citep[section 8]{Nelson1977}.%
\footnote{Nelson himself later provided a purely syntactical proof 
that proofs in IST can be reduced to proofs in
a standard system ZFC[V]
which is itself conservative over ZFC 
\citep{Nelson1988}.
Kanovei and Reeken have shown that actually the presentation of IST
there is stronger than that in \citealt{Nelson1977},
and that not all properties of the later version are shared by the earlier one.
However,
they add that this is the case if only bounded sets are considered,
and that in practice these are the ones that matter
\citep[p.128]{Kanovei.Reeken2004}.}

In the following,
we will look at a few theorems of IST,
in order to demonstrate how IST proves that
\(\numnat\)
contains nonstandard numbers,
which are greater than any standard number;
this mostly with an eye on the discussion
of Reeb’s Brouwerian take on IST
in section~\ref{SectionReeb}.%
\footnote{The references for theorems~\ref{firstthmNelson}–\ref{lastthmNelson}
are \citealt{Nelson1977}  and \citealt{Nelson2002}.}
Their reciprocals are infinitesimals.%
\footnote{In IST it is also possible to prove of
\(\numreal\)
directly that it contains infinitesimals.
But the approach through
\(\numnat\)
fits Reeb’s motivation better.}

First,
a strengthening of 
(T) together with its dual form:

\begin{theorem}[T\(^s\)]%
\label{firstthmNelson}\(\forallst t_1\dots \forallst t_k(\forallst x A(x, t_1,\dots, t_k)
\leftrightarrow
\forall x A(x, t_1,\dots, t_k))\) 
\end{theorem}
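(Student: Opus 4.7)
The plan is to establish the two directions of the biconditional separately. The forward direction, $\forallst x A(x, t_1, \dots, t_k) \to \forall x A(x, t_1, \dots, t_k)$ under the standard-parameter prefix $\forallst t_1 \dots \forallst t_k$, is precisely the transfer axiom (T) as stated above, so it requires no further argument: one simply invokes (T) verbatim.

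For the converse, $\forall x A(x, t_1, \dots, t_k) \to \forallst x A(x, t_1, \dots, t_k)$, the argument is purely logical and uses none of the IST-specific axioms. Unfolding the shorthand, $\forallst x A(x, t_1, \dots, t_k)$ abbreviates $\forall x(x\ \st \to A(x, t_1, \dots, t_k))$. Assuming $\forall x A(x, t_1, \dots, t_k)$, fix an arbitrary $x$, assume $x\ \st$, apply universal instantiation to obtain $A(x, t_1, \dots, t_k)$, discharge the hypothesis $x\ \st$, and generalise on $x$ to conclude $\forall x(x\ \st \to A(x, t_1, \dots, t_k))$. Prefixing the $\forallst t_i$ quantifiers then yields the desired implication.

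There is no substantive obstacle: the nontrivial half is simply (T), and the converse is immediate from the fact that $\forallst$ is defined as a restricted universal quantifier, so that anything true of every $x$ whatsoever is \emph{a fortiori} true of every standard $x$. The point of formulating (T$^s$) is therefore merely to package (T) together with this trivial converse, so that transfer of internal formulas can thereafter be applied as a biconditional without repeated annotation. Note that the argument depends crucially on $A$ being internal, since (T) requires this; the converse half, by contrast, would go through for external $A$ as well, but without access to (T) in the forward direction one cannot promote it to a biconditional.
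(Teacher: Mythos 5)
Your proposal is correct and matches the paper's proof, which likewise combines the axiom (T) for the forward direction with the trivial logical fact \(\forall x\,\phi(x) \rightarrow \forallst x\,\phi(x)\) for the converse. Your additional remarks on internality and on why the converse holds even for external formulas are accurate glosses but not substantively different from the paper's one-line argument.
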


\begin{proof}
From  (T) and \(\forall x \phi(x) \rightarrow \forallst x \phi(x)\).
\end{proof}

\begin{theorem}[T\(^{sd}\)]
\(\forallst t_1\dots \forallst t_k(\existsst x A(x, t_1,\dots, t_k)
\leftrightarrow
\exists x A(x, t_1,\dots, t_k))\) 
\end{theorem}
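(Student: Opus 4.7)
The plan is to derive T\(^{sd}\) from the already established T\(^s\) (Theorem \ref{firstthmNelson}) by a purely logical duality argument, since the existential statement on each side of T\(^{sd}\) is classically equivalent to the negation of a universal statement of the shape that T\(^s\) governs.

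First I would observe that, by the shorthand, \(\existsst x A(x,\bar t)\) is \(\exists x(x\ \st \wedge A(x,\bar t))\), which under classical logic is equivalent to \(\neg\forallst x\,\neg A(x,\bar t)\); similarly, \(\exists x A(x,\bar t)\) is classically equivalent to \(\neg\forall x\,\neg A(x,\bar t)\). Next, since \(A\) is internal (does not contain ‘\st’), the formula \(\neg A\) is also internal, and so T\(^s\) applied to \(\neg A\) yields
\[
\forallst t_1\dots\forallst t_k\bigl(\forallst x\,\neg A(x,t_1,\dots,t_k)\leftrightarrow\forall x\,\neg A(x,t_1,\dots,t_k)\bigr).
\]
Negating both sides of the inner biconditional under the outer standard universal quantifiers, and translating back via the classical dualities above, I obtain exactly T\(^{sd}\).

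The only step that requires any care at all is making sure the logical duality is available: IST is formulated inside classical ZFC, so the De Morgan step that turns \(\neg\forallst x\,\neg A\) into \(\existsst x A\) is unproblematic. There is no real obstacle here; the proof is essentially one line once T\(^s\) is in hand, and I would present it as: dualise T\(^s\) applied to \(\neg A\).
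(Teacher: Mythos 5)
Your proposal is correct and matches the paper's own proof exactly: the paper likewise applies T\(^s\) to \(\neg A\), negates both sides of the bi-implication, and uses the classical duality \(\neg\forall x\,\neg A \leftrightarrow \exists x\, A\) (together with its relativised analogue for \(\forallst\)). Your additional remark that \(\neg A\) remains internal and that the De Morgan step is licensed by the classical setting is a sensible bit of explicitness that the paper leaves tacit.
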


\begin{proof}
Apply 
T\(^s\)
to
\(\neg A\),
negate both sides of the bi-implication,
and use
\(\neg\forall x\neg A 
\leftrightarrow
\exists x A\).
\end{proof}

An immediate consequence of 
T\(^{sd}\) is:

\begin{theorem}\label{thm-uniqueness}
If 
\(\exists !x A(x)\),
and 
\(A(x)\)
is internal,
then
\(x\)
is standard.
\end{theorem}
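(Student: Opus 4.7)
The plan is to derive this as a direct application of $T^{sd}$ combined with the uniqueness clause of $\exists!x A(x)$. First I would unpack $\exists!x A(x)$ into its two parts: (i) existence, $\exists x A(x)$, and (ii) uniqueness, $\forall x \forall y(A(x) \wedge A(y) \to x=y)$.

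Next, since $A(x)$ is internal and (assuming) has no free variables other than $x$ (or else that all other free variables are standard parameters, as required for $T^{sd}$ to apply), I would apply $T^{sd}$ to the formula $A$. In the direction from right to left, $T^{sd}$ yields $\exists x A(x) \to \existsst x A(x)$. Combined with (i), this gives a \emph{standard} witness $x_0$ with $A(x_0)$.

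Finally, I would invoke uniqueness: the $x$ whose existence is asserted by $\exists!x A(x)$ must coincide with $x_0$, since $A(x_0)$ holds. As $x_0$ is standard, so is $x$.

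There is no real obstacle here; the only point requiring a little care is the implicit assumption that any parameters appearing in $A$ are standard, which is needed to match the hypothesis of $T^{sd}$. This is consistent with the way the result will be used in the sequel, where the unique object characterised by an internal condition on standard data is shown to be standard itself.
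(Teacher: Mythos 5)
Your proof is correct and matches the paper's route exactly: the paper simply states the theorem as ``an immediate consequence of T\(^{sd}\)'', and your unpacking --- use the right-to-left direction of T\(^{sd}\) to obtain a standard witness, then invoke uniqueness to identify it with the unique \(x\) --- is precisely that argument spelled out. Your caveat about free parameters needing to be standard is also the right reading of the hypothesis of T\(^{sd}\), and it is indeed how the theorem is applied later (e.g.\ to the power set of a standard \(z\)).
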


In particular,
\(\numnat\)
is standard.

There is a dual of (I):

\begin{theorem}[I\(^d\)]
\(\existsstfin z\forall x\exists y\in z B(x,y) 
\leftrightarrow
\forall x\existsst y B(x,y)\)
\end{theorem}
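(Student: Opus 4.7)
My plan is to mimic exactly the strategy that was just used to derive T\(^{sd}\) from T\(^{s}\): replace \(B\) by \(\neg B\) in (I), negate both sides of the resulting bi-implication, and push the negation inward through the (internal and restricted) quantifiers using classical logic, which is available since IST extends ZFC.

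Concretely, I would first instantiate (I) with \(\neg B(x,y)\) in place of \(B(x,y)\), obtaining
\[
\forallstfin z\,\exists x\,\forall y\in z\,\neg B(x,y)
\;\leftrightarrow\;
\exists x\,\forallst y\,\neg B(x,y).
\]
Then I would negate both sides. On the left, \(\neg\forallstfin z\,\varphi\) unfolds via the shorthand to \(\exists z((z \ \st \wedge z\text{ finite})\wedge \neg\varphi)\), i.e.\ \(\existsstfin z\,\neg\varphi\); and inside \(\neg\varphi\) the negation passes through \(\exists x\) and \(\forall y\in z\) in the usual classical way, cancelling the double negation on \(B\). The right side is handled by the same moves applied to \(\forallst\), which unfolds to \(\forall y(y \ \st \rightarrow \neg B(x,y))\) and negates to \(\existsst y\,B(x,y)\). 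Collecting everything yields
\[
\existsstfin z\,\forall x\,\exists y\in z\,B(x,y)
\;\leftrightarrow\;
\forall x\,\existsst y\,B(x,y),
\]
which is the claim.

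There is no real obstacle; the only thing to be careful about is the bookkeeping when pushing \(\neg\) across the shorthand quantifiers \(\forallst\), \(\forallstfin\), \(\existsst\), \(\existsstfin\), making sure the standardness and finiteness clauses end up in the right conjunctive/implicative position. Because these abbreviations reduce to ordinary ZFC quantifier blocks, the manipulation is purely propositional/first-order and the classical law \(\neg\neg B \leftrightarrow B\) (applied once to undo the introduction of \(\neg B\)) suffices. Note that, as with the derivation of T\(^{sd}\), this argument is essentially nonconstructive: it leans on the classical interdefinability of \(\forall\) and \(\exists\), consistently with the theorem of Moerdijk and Palmgren recalled above.
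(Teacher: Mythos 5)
Your proposal is correct and matches the paper's proof exactly: apply (I) to \(\neg B(x,y)\), negate both sides, push the negations inward through the (unfolded) relativised quantifiers, and cancel the double negation. The extra care you take with the shorthand quantifiers is just an expanded version of the same bookkeeping.
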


\begin{proof}
Apply 
(I)
to 
\(\neg B(x,y)\),
negate both sides,
push the negations inward,
and cancel double negations.
\end{proof}

The key theorem of IST is this,
which entails that nonstandard objects exist  formally:

\begin{theorem}\label{IST-theorem}
Let
\(X\)
be a set.
Then 
\[
X
\text{ is a standard finite set}
\Leftrightarrow
\text{Every element of 
\(X\)
is standard}
\]
\end{theorem}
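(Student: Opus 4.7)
The plan is to derive the theorem from a single application of the dual idealization principle $(\mathrm{I}^d)$, followed by a brief bootstrap through a powerset. I would instantiate $(\mathrm{I}^d)$ with the internal formula
\[
B(x,y) \;:\equiv\; (x \in X \rightarrow x = y),
\]
so that the left-hand side $\forall x\,\existsst y\, B(x,y)$ says every element of $X$ coincides with some standard $y$, i.e.\ every element of $X$ is standard, while the right-hand side $\existsstfin z\,\forall x\,\exists y\in z\, B(x,y)$ says every element of $X$ lies in $z$, i.e.\ $X$ is included in some standard finite $z$. This delivers the auxiliary equivalence
\[
(\dagger)\qquad \text{every element of } X \text{ is standard} \;\Leftrightarrow\; X \subseteq z \text{ for some standard finite } z.
\]

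For $(\Rightarrow)$ of the theorem, I would witness the right-hand side of $(\dagger)$ by $z = X$ (or by $z = \{\emptyset\}$ in the degenerate case $X=\emptyset$), which is standard and finite by hypothesis; so every element of $X$ is standard. For $(\Leftarrow)$, $(\dagger)$ produces a standard finite $z \supseteq X$, whence $X$ is finite; the serious point is to show that $X$ is itself standard, and I would do this by applying the already-proved direction $(\Rightarrow)$ not to $X$ but to $\mathcal{P}(z)$. Since $z$ is standard, $\mathcal{P}(z)$ is uniquely characterised by an internal formula with standard parameter $z$, so by Theorem~\ref{thm-uniqueness} (in its form with standard parameters, derivable from $\mathrm{T}^{sd}$) it is standard; it is finite because $z$ is; hence $(\Rightarrow)$ applied to $\mathcal{P}(z)$ yields that every one of its elements is standard, and since $X \in \mathcal{P}(z)$, $X$ is standard.

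The main obstacle I foresee is the apparent circularity of $(\Leftarrow)$: the direct attempt to upgrade ``$X$ is included in a standard finite set'' to ``$X$ is a standard finite set'' looks like an instance of the very biconditional being proved. Moving up one level to $\mathcal{P}(z)$ sidesteps the circle cleanly, because the standardness of $\mathcal{P}(z)$ comes from the uniqueness principle and not from the biconditional, so the subsequent appeal to the already-established forward direction is legitimate.
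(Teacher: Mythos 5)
Your proposal is correct and follows essentially the same route as the paper: the auxiliary equivalence $(\dagger)$ is exactly the paper's Lemma~\ref{IST-lemma} (your $B(x,y)$ is just the paper's $\neg(x\in X \wedge x\neq y)$ rewritten as an implication), the forward direction is witnessed by $z=X$ in both, and the backward direction uses the identical powerset bootstrap via Theorem~\ref{thm-uniqueness}. Your explicit attention to the standardness of the parameter $z$ in the uniqueness step, and to the degenerate case $X=\emptyset$, is slightly more careful than the paper's own exposition.
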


\begin{corollary}
Every infinite set has a non-standard element.
In fact,
it has infinitely many non-standard elements,
because whenever we remove one non-standard element from it, 
the theorem applies again.
\end{corollary}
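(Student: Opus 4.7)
The plan is to argue by contradiction using Theorem~\ref{IST-theorem} directly, then iterate for the ``infinitely many'' part.

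First I would suppose, for contradiction, that every element of the given infinite set $X$ is standard. By the $(\Leftarrow)$ direction of Theorem~\ref{IST-theorem}, this forces $X$ itself to be a standard finite set, contradicting the assumption that $X$ is infinite. Hence there must exist some $x_0 \in X$ that is non-standard. This already establishes the first sentence of the corollary.

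For the second sentence I would proceed by induction. Having produced pairwise distinct non-standard elements $x_0, x_1, \dots, x_{n-1} \in X$, I would form the set $X_n = X \setminus \{x_0, \dots, x_{n-1}\}$, which is again infinite because $X$ is infinite and we have removed only finitely many elements. Applying the previous paragraph to $X_n$ in place of $X$ yields a non-standard element $x_n \in X_n$, distinct from all $x_i$ with $i < n$. Iterating gives an infinite supply of non-standard elements of $X$.

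The one point that deserves care — and which I expect to be the main conceptual obstacle — is the legitimacy of the set-formation step $X_n = X \setminus \{x_0, \dots, x_{n-1}\}$ when the $x_i$ are non-standard. This is legitimate because ``$X \setminus \{x_0, \dots, x_{n-1}\}$'' is defined by an internal formula (using only $\in$ and equality to the parameters $x_0, \dots, x_{n-1}$), so the ZFC separation axiom applies without any interference from the external predicate \st; we are not trying to separate by the external predicate ``non-standard''. With that observation in place, both halves of the corollary follow immediately from Theorem~\ref{IST-theorem}.
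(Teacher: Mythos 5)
Your argument is correct and matches the paper's own justification exactly: the first claim follows from the right-to-left direction of Theorem~\ref{IST-theorem} (an infinite set cannot be a standard finite set, so not all its elements are standard), and the second by removing non-standard elements one at a time and reapplying the theorem to the still-infinite remainder. Your added observation that $X \setminus \{x_0,\dots,x_{n-1}\}$ is formed by an internal formula with parameters, so ordinary ZFC separation applies without any interference from the external predicate `\st', is a worthwhile point of care that the paper leaves implicit.
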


First we prove 

\begin{lemma}\label{IST-lemma}
\[
X
\text{ is a subset of a standard finite set}
\Leftrightarrow
\text{Every element of 
\(X\)
is standard}
\]
\end{lemma}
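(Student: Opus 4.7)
The plan is to apply the dual idealization principle $I^d$ to the internal formula
\[
B(x, y) \;:\equiv\; (x \in X \rightarrow x = y),
\]
in which $X$ figures only as a set parameter. My hope is that the two sides of $I^d$ instantiated at this $B$ will coincide with the two sides of the lemma, so that a single application delivers the whole biconditional.

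First I will evaluate the right-hand side of $I^d$, namely $\forall x \existsst y\, B(x,y)$. For $x \notin X$ the implication $x \in X \rightarrow x = y$ is vacuously true, so any standard $y$ witnesses the existential; for $x \in X$ it reduces to $\existsst y\,(x = y)$, which says exactly that $x$ itself is standard. So this side is equivalent to the assertion that every element of $X$ is standard.

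Next I will evaluate the left-hand side $\existsstfin z \forall x \exists y \in z\, B(x,y)$. A witnessing standard finite $z$ must be nonempty, since otherwise $\exists y \in z$ fails outright; and for each $x \in X$ it must contain some $y$ with $y = x$, forcing $X \subseteq z$. Conversely, any nonempty standard finite $z$ containing $X$ witnesses the conjunct (take $y = x$ when $x \in X$, and $y$ to be any fixed element of $z$ otherwise). So this side is equivalent to $X$ being contained in a standard finite set; the nonemptiness caveat is no real restriction, since if $X$ sits inside a standard finite set it also sits inside a nonempty one, obtained by adjoining a standard singleton.

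Combining the two evaluations by means of $I^d$ then gives the lemma. The only thing that needs separate verification is that $B$ really is internal, and this is immediate, since the predicate ‘$\st$’ does not occur in it; I do not foresee any real obstacle beyond keeping the vacuous cases in the two evaluations properly straight.
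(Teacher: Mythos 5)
Your proposal is correct and is essentially the paper's own proof: the paper applies I\(^d\) to \(\neg(x \in X \wedge x \neq y)\), which is exactly your formula \(x \in X \rightarrow x = y\), and reads off the same two equivalences. Your explicit handling of the empty-\(z\) edge case on the left-hand side is a point the paper passes over silently, but it changes nothing of substance.
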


\begin{proof}
Set
\(B(x,y) = x \in X \wedge x \neq y\)
and apply the dual of idealisation (I\(^d\)) to
\(\neg B(x,y)\):
\[
\existsstfin z\forall x\exists y\in z \neg(x \in X \wedge x \neq y)
\leftrightarrow
\forall x\existsst y \neg(x \in X \wedge x \neq y)
\]
Applying logic to the the right hand side,
we get
\[
\existsstfin z\forall x\exists y\in z \neg(x \in X \wedge x \neq y)
\leftrightarrow
\forall x \in X(x \text{ standard})
\]
and to the left hand side,
\[
\existsstfin z(X \subseteq z) 
\leftrightarrow
\forall x \in X(x \text{ standard})
\]
\end{proof}

\begin{proof}[Proof of Theorem~\ref{IST-theorem}]
Left to right:
The assumption that
\(X\)
 is a standard finite set
gives,
together with
\(X \subseteq X\),
\(\existsstfin z(X \subseteq z)\),
and now apply Lemma~\ref{IST-lemma} from left to right.

Right to left:
Assume that every element of 
\(X\) 
is standard.
By Lemma~\ref{IST-lemma},
from right to left,
we have 
\(\existsstfin z(X \subseteq z)\).
By ZFC,
the
power set of 
\(z\),
\(P(z)\),
exists:
\[
\exists x\forall y(y \in x \leftrightarrow y \subseteq z)
\]
This formula is internal,
and 
\(P(z)\)
is unique,
so, 
by
Theorem~\ref{thm-uniqueness},
\(P(z)\)
is standard.
It is also finite,
because 
\(z\) is. 
Applying the proof in the previous paragraph for the direction from left to right
to
\(P(z)\),
all elements of 
\(P(z)\)
are standard,
and as
\(X \in P(z)\),
in particular 
\(X\)
is.
Finally,
\(X\)
is finite because 
\(z\)
is.
\end{proof}

By the corollary to
Theorem~\ref{IST-theorem},
\(\numnat\)
contains a nonstandard number;
this may be thought of as a
proof that the natural numbers we usually work with,
\(0, 1, 2, \dots\)
do not exhaust 
\(\numnat\).
This idea became important to Reeb; see below, section~\ref{SectionReeb}.
Also by the corollary,
there is no set containing exactly those natural numbers that are standard natural numbers;
as mentioned,
the set-forming principles of ZFC
do not have a grip on
the predicate 
‘\st’.
Finally,
we have
\begin{theorem}\label{theorem-greater}\label{lastthmNelson}
The nonstandard numbers in
\(\numnat\)
are greater than
all its standard elements.
\end{theorem}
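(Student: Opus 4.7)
The plan is to prove the contrapositive: if $n \in \numnat$ satisfies $n \leq m$ for some standard $m \in \numnat$, then $n$ itself is standard. This reduces the theorem to an application of Theorem~\ref{IST-theorem} once a suitable finite set containing $n$ has been exhibited.

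The candidate set is $S_m := \{k \in \numnat : k \leq m\}$. Two things must be verified about it. \emph{Finiteness} is immediate: $S_m$ has $m+1$ elements. \emph{Standardness} is the step that needs care. The set $S_m$ is uniquely characterised by the internal formula
\[
A(x,m) \;\equiv\; \forall y(y \in x \leftrightarrow y \in \numnat \wedge y \leq m),
\]
whose only parameters, $\numnat$ and $m$, are standard (recall that $\numnat$ is standard by Theorem~\ref{thm-uniqueness}, and $m$ is so by assumption). Theorem~\ref{thm-uniqueness} as stated does not explicitly allow parameters, but the obvious generalisation goes through in exactly the same way: $\forallst m(\existsst x A(x,m) \leftrightarrow \exists x A(x,m))$ by $T^{sd}$, so for standard $m$ the unique $x$ satisfying $A(x,m)$ must be standard. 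Hence $S_m$ is a standard finite set.

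Once $S_m$ is known to be standard and finite, Theorem~\ref{IST-theorem} (read from left to right) says that every element of $S_m$ is standard. Since the hypothesis $n \leq m$ places $n$ in $S_m$, $n$ is standard, completing the contrapositive and hence the theorem.

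The main obstacle, such as it is, is the parameter issue in the appeal to Theorem~\ref{thm-uniqueness}: one has to be comfortable with the observation that uniqueness combined with $T^{sd}$ yields standardness of the unique witness whenever all parameters of the defining formula are standard. Everything else in the argument is bookkeeping, and the use of Theorem~\ref{IST-theorem} is exactly the application the theorem was designed for.
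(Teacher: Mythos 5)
Your proof is correct, and its mathematical core coincides with the paper's: both arguments turn on the observation that for a standard $m$ the set $\{k \in \numnat \mid k \leq m\}$ is a standard finite set, so that Theorem~\ref{IST-theorem} forces all of its elements to be standard. The packaging differs. You take the contrapositive directly — $n \leq m$ with $m$ standard implies $n$ standard — and are done, which is the most economical route to the stated theorem. The paper instead fixes a nonstandard $n$, uses (S) to form the standardized set $^{S}\{z \in \numnat \mid z < n\}$, and uses Lemma~\ref{lemma-standardequal} to conclude that this set equals $\numnat$; your argument appears there as the step showing that every standard $z$ satisfies $z < n$. The paper's detour buys a slightly stronger-looking fact about standardization (illustrating how (S) behaves), at the cost of extra machinery; your version isolates exactly what the theorem needs. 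One small remark: your worry about Theorem~\ref{thm-uniqueness} not allowing parameters is already answered by the paper's statement of T\(^{sd}\), whose prefix $\forallst t_1 \dots \forallst t_k$ is precisely the licence to carry standard parameters, so your generalisation is not really a generalisation — but spelling it out as you do is harmless and arguably clearer.
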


\noindent
First we prove

\begin{lemma}\label{lemma-standardequal}
Two standard sets are equal if they have the same standard elements.
\end{lemma}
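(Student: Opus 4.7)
The plan is a direct application of the Transfer principle (T), in the form of its strengthening $T^s$ (Theorem~\ref{firstthmNelson}), to the internal formula expressing that two sets have the same members, with the two standard sets appearing as the standard parameters.

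More concretely, let $X$ and $Y$ be the two standard sets in the statement, and consider the formula
\[
A(z, X, Y) := (z \in X \leftrightarrow z \in Y).
\]
The first thing to verify is that $A$ is internal: this is immediate, since the only primitive predicates occurring in it are $\in$ and $=$, both belonging to the language of ZFC, and the new predicate $\st$ does not appear. The parameters $X$ and $Y$ are standard by hypothesis, so (T) applies with $t_1 = X$ and $t_2 = Y$.

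By hypothesis we have $\forallst z A(z, X, Y)$, namely, every standard $z$ belongs to $X$ iff it belongs to $Y$. Transfer (in the form of $T^s$, or just (T) together with the trivial implication $\forall z A \to \forallst z A$) then yields $\forall z A(z, X, Y)$, i.e., $X$ and $Y$ have the same elements, standard or not. Finally, the axiom of extensionality of ZFC gives $X = Y$.

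There is no real obstacle: the whole content of the lemma is that extensionality, which a priori quantifies over all elements, can be reduced in the standard case to quantification over standard elements, and this reduction is exactly what (T) is designed to provide. The one point that deserves attention is checking the internality of the formula and the standardness of the parameters, since (T) fails without both conditions; but here both hold by inspection.
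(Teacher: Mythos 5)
Your proof is correct and takes exactly the paper's route: the paper proves the lemma by applying (T) to the internal formula $A(x,t_1,t_2) = x \in t_1 \leftrightarrow x \in t_2$ with the two standard sets as parameters, then concluding by extensionality. Your version merely spells out the internality check and the final appeal to extensionality.
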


\begin{proof}
Apply (T)  to
\(A(x, t_1, t_2) = x \in t_1 \leftrightarrow x \in t_2\).
\end{proof}

\begin{proof}[Proof of Theorem~\ref{theorem-greater}]
Let 
\(n \in \numnat\)
be nonstandard.
By
(S),
there exists a
standard subset of 
\(\numnat\),
notation
\(^{S}\{z \in \numnat \mid z < n\}\),
such that it includes
all standard elements
of
\(\numnat\)
that
satisfy
\(z < n\);
by Lemma~\ref{lemma-standardequal},
that set is unique.%
\footnote{But (S) does not guarantee that it does not also contain nonstandard elements.}
Obviously,
any standard element of
\(^{S}\{z \in \numnat \mid z < n\}\)
is a standard element of
\(\numnat\).
On the other hand,
if 
\(z\)
is a standard element of
\(\numnat\),
then the set
\(\{w \in \numnat \mid w \leq z\}\)
is a standard finite set.
Theorem~\ref{IST-theorem}
entails that all its elements are standard,
hence
\(n \not\in \{w \in \numnat \mid w \leq z\}\),
and
\(z < n\).
It follows that 
\(^{S}\{z \in \numnat \mid z < n\}\)
and
\(\numnat\)
have the same standard elements.
Both are standard sets,
so Lemma~\ref{lemma-standardequal} applies and
\(^{S}\{z \in \numnat \mid z < n\} = \numnat\).
\end{proof}

From a 
radically formalist position
the axioms could be left unmotivated,
once the axioms are shown or at least believed to be consistent.
Nelson’s paper includes a 
(relative) 
consistency proof;
in his later book chapter,
there are informal considerations
for accepting them.
I single out the one for 
(I),
as it is the one that formally implies the existence
of nonstandard objects:
\begin{quote}
The intuition behind (I) is that we can only fix a finite number of objects at a time. 
To say that there is a 
\(y\) 
such that for all 
fixed 
\(x\) 
we have 
\(A\) [i.e., \(B(x,y)\)]
is the same as saying that for any fixed finite set of 
\(x\)’s 
there is a 
\(y\)  
such that 
\(A\) 
holds for all of them.
\citep[p.5]{Nelson2002}
\end{quote}

Nelson acknowledged of course that there is informal discourse in mathematics,
and the term ‘fixed’ belongs to that realm 
\citep[p.1]{Nelson2002}.
But since Nelson’s reflected judgement is that there is no mathematical reality,
be it intuitionistic or Platonic,
and that strictly speaking mathematics consists in formal systems,%
\footnote{See on this also chapter 32,
‘A modified Hilbert Program’,
in
\citealt{Nelson1986}.}
the strict counterpart of the informal discourse’s notion of being fixed
for him must be found in a property of formal proofs.
The statement
‘we can only fix a finite number of objects at a time’
is then mapped
to the fact that each proof in the formal system at hand
is a finite object,
which therefore leaves room for only finitely many
occasions to
define
(fix)
individual objects
and prove or assume their existence.

An analogous argument for a simpler case is this (presentation after \citealt[p.1195]{Palmgren1993}):

\begin{theorem}
Extend Peano Arithmetic with a
constant
\(ω\)
and the
axiom schema
\(ω > n\),
to obtain a nonstandard theory PA\(^*\).
Then 
PA\(^*\) 
is conservative over PA.
\end{theorem}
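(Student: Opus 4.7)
The plan is to exploit the finitary nature of formal proofs, exactly in the spirit of Nelson's informal motivation for (I) that is quoted just above: a single derivation can ``fix'' only finitely many witnesses at a time. Suppose PA\(^*\) \(\vdash \phi\), where \(\phi\) is a formula in the language of PA and, in particular, contains no occurrence of the new constant \(\omega\). Since a derivation is a finite syntactic object, it invokes only finitely many instances of the new axiom schema, say \(\omega > \overline{n_1}, \ldots, \omega > \overline{n_k}\).

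The key step is then to set \(N := 1 + \max(n_1, \ldots, n_k)\) and perform a uniform substitution throughout the derivation, replacing every occurrence of the constant \(\omega\) by the numeral \(\overline{N}\). Because \(\phi\) does not mention \(\omega\), the end-formula is unchanged. I would then verify that the substituted sequence is a legitimate derivation in PA alone: the ordinary PA axioms and instances of the induction schema remain PA axioms (substitution of a closed term for a constant sends an induction instance on \(\psi(x, \omega)\) to an induction instance on \(\psi(x, \overline{N})\), still a formula of PA); each new axiom \(\omega > \overline{n_i}\) becomes \(\overline{N} > \overline{n_i}\), which PA proves by the choice of \(N\); and the logical inference rules commute with substitution of a closed term for a constant.

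The main obstacle is really only the bookkeeping needed to check that the substitution is genuinely uniform and that every schema instance becomes a valid schema instance of PA; once this is done, conservativity is immediate. A model-theoretic proof is also available by compactness, since any finite subset of \(\{\omega > n : n \in \numnat\}\) is satisfied in the standard model by interpreting \(\omega\) as a large enough numeral; but the syntactic argument is preferable here because it mirrors, almost verbatim, the intuition just quoted from Nelson about the finiteness of what a single proof can fix.
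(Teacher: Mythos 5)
Your proposal is correct and follows essentially the same route as the paper's own proof: observe that the derivation, being finite, uses only finitely many instances \(\omega > n_1, \dots, \omega > n_k\) of the schema, set the replacement value to \(\max(n_1,\dots,n_k)+1\), and substitute it for \(\omega\) throughout. Your version is in fact slightly more careful than the paper's (which speaks of a proof of \(A(\omega)\) rather than of an \(\omega\)-free formula, and omits the check that substitution preserves axiom and rule instances), but the underlying idea is identical.
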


\begin{proof}
Assume we have a formal proof of
\(A(ω)\).
As the formal proof is finite,
only finitely many instances of the schema
occur in it,
\(ω > n_1,\dots,ω > n_k\).
Define
\(m=\text{max}(n_1,\dots,n_k)+1\),
and
replace,
in the original proof,
\(ω\)
by
\(m\)
everywhere.
\end{proof}

Below,
in section~\ref{SectionReeb},
we will see that 
according to Reeb,
who was not a formalist and who held that
there is a mathematical reality
which furthermore is constructive,
there is  in mathematical reality a
motivation for introducing the predicate
‘\st’
in ZFC,
and from there for accepting the formal theory IST.

\section{Three Brouwerian desiderata}\label{SectionBrouwerCriteria}

Turning now to Brouwer’s writings,
one may distill three desiderata for
constructions of
infinitesimals:
\begin{enumerate}

\item They should be intuitionistic constructions, 
i.e., be built up starting from 
‘the basic intuition of mathematics’:
\begin{quote}
the substratum of all perception of change, 
which is divested of all quality, 
a unity of continuous and discrete, 
a possibility of the thinking together of several units, 
connected by a ‘between’, 
which never exhausts itself by the interpolation of new units.
(\citealt[p.8]{Brouwer1907}, trl.~\citealt[p.17]{Brouwer1975}):
\end{quote}
Further on in the dissertation
Brouwer specifies that this basic intuition
consists in the awareness 
of time as pure change 
(\citealt[pp.98–99]{Brouwer1907})
that 
it and our construction acts on it
are not of a linguistic nature  
(\citealt[p.169]{Brouwer1907})
and that there is not also
a spatial continuum that is a priori given to us
(\citealt[p.121]{Brouwer1907}).

\item Logical reasoning about them should be done according to the
nature of mental
constructions.
This respects the essential non-linguistic character of
mathematical
construction,
and the nature of logic,
such as Brouwer describes it,
as a
study of the patterns in descriptions of that activity 
(\citealt[pp.131-132]{Brouwer1907};
\citealt{Brouwer1908C}).
Whatever logical principle
one has recognised as correct on this conception should be
allowed in one’s
reasoning.

\item They should be geometrical in nature. 
Brouwer defines geometry as 
follows:
\begin{quotation}
Geometry is concerned with the properties of spaces of one
or more dimensions.
In particular it investigates and classifies sets,
transformations and transformation groups in these spaces.

The spaces under consideration are built up out of one or
more Cartesian simplices,%
\footnote{In two dimensions a simplex is a triangle with all its interior points;
in three dimensions a pyramid with a triangle as its base.}
which can be connected in different ways;
consequently a space is not completely defined by its
dimension alone.
(\citealt[p.15]{Brouwer1909A}, trl.~\citealt[p.116]{Brouwer1975})
\end{quotation}

\end{enumerate}
The classical and constructive nonstandard-models
of the previous section obviously do not meet these desiderata,
and neither does a purely axiomatic approach.
But it is of course just as clear that there will be no direct
intuitive construction of infinitesimals on the one-dimensional continuum.
If in the next section the reason for this is spelled out,
that is because it adds relief to Brouwer’s construction
in section~\ref{SectionBrouwerRealnumber}
of a real number that is greater than
\(0\),
but of which we cannot indicate a positive distance from 
\(0\);
this is the kind of construction that Vesley took up,
as we will see in section~\ref{SectionVesley}.

At the same time,
it should also be noted that
even in classical nonstandard analysis 
there is a large constructive element om the following sense:
Once non-constructive methods have been employed to obtain infinitesimals,
the reasoning often proceeds constructively,
employing standardisation at the very end to return
to the realm of the standard.
That topic has recently been explored in great detail
in
\citealt{Sanders2018}.

\section{The impossibility of a direct construction on the one-dimensional continuum}

The intuitive continuum
as given in 
what Brouwer calls
the basic intuition of mathematics
has no scale on it.%
\footnote{The primary reference for this paragraph and the next is Brouwer’s dissertation,
\citealt[pp.8–11]{Brouwer1907}, 
but the argument is general.}
Brouwer’s ‘between’
is not intrinsically tied to intervals of
any size,
because if there is no scale then there are no sizes,
in particular no infinitesimal ones.
This intuitive ‘between’
precedes the construction of a scale,
and the scale is constructed
by 
‘the interpolation of new units’
\emph{on}
it.

Putting a scale on the intuitive continuum is itself
a construction process that takes place over time.
The human mind is limited in such a way that
in a given time interval
we can only place finitely many points of a scale
on the intuitive continuum,
or begin a potentially infinite sequence of such
placements.
Between any two previously placed points,
an intuitive continuum remains,
and
if we choose to do so,  
we can place a further point
on this ‘between’,
and thereby continue
the construction of our scale
into it.
If we iterate this everywhere,
in a potentially infinite process,
we construct
a countable,
everywhere dense scale.
We can correlate the points on this scale
with
any number system we have constructed of the order type
of 
$\numrat$; 
we may begin by correlating an arbitrary point 
on the scale with $0$ and another one with $1$.
We thus obtain 
a ‘measurable continuum’ 
(\citealt[p.11]{Brouwer1907}).

Once the construction process of the rational scale has begun,
we then construct 
points
or real numbers
(including the embedding of the rationals)
$p$
by constructing
potentially infinite
sequences
of nested
intervals 
with endpoints on the scale
$p_0, p_1, p_2, \dots$
As a point does not exist on the continuum prior to our
construction,
it is identified with the developing 
sequence,
as opposed to an independently existing limit to which the sequence
converges.
If two points 
$p$
and
$q$
are not equal,
this unequality must consist in the fact that
starting from a finite index $n$,
the intervals
$p_n$
and
$q_n$
do not overlap.
As the intervals are determined by rationals,
we can determine a natural number 
$m$ such that 
$mp > q$
or,
as the case may be,
$mq > p$.
So the system is Archimedean,
and infinitesimals or
intervals of infinitesimal length
do not exist.
To construct an infinitesimal interval on the intuitive continuum,
we would have to be able to construct a point
$p$
such that $\neg(p = 0)$
but of which it is contradictory to assume that the
unequality to $0$ arises
at some 
$p_n$
for natural $n$
(i.e., at a rational distance from $0$).
That is impossible.

We will see in section~\ref{SectionBrouwerRealnumber} 
that,
with the admission into intuitionism of choice sequences,
we 
\emph{can}
construct a real number $r$ that is
unequal to $0$ and of which we cannot indicate
the interval
$r_n$
at which the 
unequality to
$0$
arises
\emph{until a certain proposition
$P$
has been decided}.
But before
$P$
has been decided we can already show
that it is contradictory to suppose that the 
unequality to
$0$
arises at 
\emph{no}
finitely indexed interval.

The fact that there are,
in the basic intuition of mathematics, 
no direct motivation and no direct construction
for infinitesimals
(as objects constructed on the one-dimensional continuum),
the development of a theory would have to proceed,
just as in the classical case,
either by an embedding of the standard real numbers into a more-dimensional structure,
and thereby no longer take propositions in analysis of the one-dimensional continuum
at face value,
or construe talk about nonstandard objects as talk about certain standard objects.

\section{Brouwer’s non-Archimedean numbers}

Early Brouwer’s  construction of non-Archimedean numbers
discussed in this section was not meant to  lead up to
a form of infinitesimal analysis.
However,
that 
would have been a first step,
so the general considerations are of interest to the present discussion.

When Brouwer was working on his dissertation,
non-Archimedean fields and geometries  
had been constructed by
notably Veronese,
Levi-Civita,
Pasch,
Hilbert,
and Vahlen,%
\footnote{An extensive historical treatment
is \citealt{Ehrlich2006}.}
and these he refers to in his notebooks and in his thesis,
with an emphasis on Hilbert.
Brouwer criticised these approaches:
Veronese’s was not constructive in his sense,
and those of Pasch, Hilbert and Vahlen
are not geometrical in his sense.

In Veronese’s 
\emph{Fondamenti di Geometria} 
of 1891,
a real number is construed as the ratio of two magnitudes (both of the same species),
one of which is designated to be the unit;
and the existence of a segment 
that is infinitesimal with respect to another
is 
\textit{postulated}.
Veronese can do so because,
as he states in his introduction,
‘A thing postulated by thought
one can consider as given to thought,
and inversely’ 
\citep[Introduzione, section 18, trl.~mine]{Veronese1891}.
For Brouwer,
on the other hand,
only that what has been constructed from the basic
intuition qualifies as given.
In a notebook that predates his dissertation,
he comments:
\begin{quote}\label{BrouwerVeronese}
Veronese’s fuss,
with his constantly introducing hypotheses,
is nothing but
forming logical assemblies;
if for certain things
(I do not know whether they exist)
such and such relations hold,
then also
such and such relations.
\citep[Notebook 3, p.35, trl.~mine]{Brouwer19041907}
\end{quote}

Hilbert’s non-Archimedean geometries are criticised
for their
non-geometrical nature.
In the synopsis for his dissertation,
Brouwer writes:
\begin{quote}
Hilbert’s pseudo-geometries are
(in contrast to the non-Euclidian)
of little importance,
because they have been built within
a rather ‘far-fetched’ building [i.e., construction]
(while the non-Euclidian in the ordinary Cartesian space).
(\citealt[p.405, trl.~mine]{Brouwer19041907})
\end{quote}
Brouwer is referring to the fact that
the coordinates of points in the space are not real numbers
but objects of a higher type,
namely certain algebraic functions on the real numbers
\citep[p.25]{Hilbert1899}.
Such algebraic functions may themselves be represented 
(extensionally)
geometrically,
but Brouwer’s hesitation here would be that 
each such representation is not
a point in a
(\(n\)-dimensional)
Cartesian space.
Hilbert,
of course,
proposed 
his non-Archimedean geometry 
first of all 
in the service
of an independence proof of the Archimedean axiom,
and then Brouwer’s considerations are not that important.
But such geometries soon turned out to be of interest in their own right.

In his dissertation,
Brouwer presents
an
alternative
non-Archimedean continuum,
a(n intended)
construction
in his specific, non-axiomatic sense of that term,
in an ordinary
Cartesian space 
(\citealt[pp.67–73]{Brouwer1907}).
It
is,
he states
(\citealt[p.140n]{Brouwer1907})
a generalization
of that in
\citealt[section 34]{Hilbert1899}.

By Brouwer’s de facto phenomenological consideration,
given in the previous section,
a non-Archimedean (mathematical) continuum
cannot be constructed on 
the one-dimensional intuitive one. 
His strategy therefore
was to construct a multi-dimensional mathematical continuum
and define a subset on it 
which he calls
‘the pseudo-continuum’.
The pseudo-continuum can be 
linearly ordered
and the 
one-dimensional continuum
embedded into it.

Brouwer starts with
an infinite-dimensional Cartesian space
of
\((ω^{*}+ω)^n\)
dimensions,
where 
\(ω^{*}\)
is
\(\ldots, -3, -2, -1\).
Each coordinate has 
(instead of a letter)
an ordinal number
in between
\(-ω^n\)
and
\(ω^n\),
which can be written in the form
\(a_1ω^{n-1}+\cdots+a_{n-1}ω+a_n\),
with
\(-ω < a_i < ω\).
To the coordinate then is associated
the 
\(n\)-tuple of indices
\(\langle a_1,\ldots,a_n \rangle\).

The ‘pseudo-continuum’ 
now consists of the subset of points in the
space with the property that 
for all their coordinates whose value is not 
\(0\)
we can indicate lower bounds on the 
\(a_i\):
the property,
in other words,
that
non-zero values are not found at
arbitrarily low coordinate numbers.
This in turn means
that the coordinate numbers
corresponding to those 
\(n\)-tuples
form a well-ordered set
(i.e., a set of which each non-empty subset has a first element).
So 
for any two distinct points
\(p,q\)
there will be
a smallest coordinate number at which they differ,
and therefore the set can be linearly ordered.

The one-dimensional
Archimedean continuum
is embedded into the pseudo-continuum
by assigning the point
on the former
with coordinate
\(x\)
to the point on the latter
whose coordinates are all 
\(0\)
except that its
\(0\)-coordinate is
\(x\).
One may view the pseudo-continuum
as a real continuum
with infinitely many points
inserted to the right and left of each
real point,
and with infinitely many pseudo points
to the left and right of the real continuum as a whole.

The operations 
\(+\)
and
\(\times\)
are understood group-theoretically,
that is,
as parametrised transformation operations
\(+ a\)
and
\(\times a\).
\begin{theorem}[Brouwer 1907]\label{grouptheorem}
On a measurable
continuum,
there is only one construction for
a two-parameter continuous uniform
transformation group
\[x^\prime = c_1\times x + c_2\]
namely the one
in which
\(+\)
and
\(\times\)
are ordinary addition and multiplication
(and hence
commutative).
(\citealt[pp.32–33]{Brouwer1907})
\end{theorem}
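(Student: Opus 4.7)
The plan is to mimic the Lie-theoretic classification of transitive two-parameter transformation groups of the line, but adapted to Brouwer's measurable continuum, by isolating a translation subgroup and a stabilizer subgroup and forcing each to coincide with the familiar operations.

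First I would single out, inside the two-parameter group, the one-parameter family of those transformations that have no fixed point; call these the \emph{translations}. By uniformity together with continuity, any two points of the measurable continuum should be related by exactly one such transformation, so the translations act freely and transitively. Choosing a base point $0$, each translation $T$ is then labelled by $T(0)$, inducing an abstract binary operation $\oplus$ on the continuum via $T_a\circ T_b = T_{a\oplus b}$. Because this arises from composition in a one-parameter subgroup, $\oplus$ is automatically associative and commutative.

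Second, I would fix the identification of $\oplus$ with ordinary addition. After normalising so that the label $1$ corresponds to the point $1$ already placed on the measurable continuum, iteration gives $T_n(0)=n$ on the integers, extraction of $q$-th roots (using that $T_{p/q}$ composed with itself $q$ times is $T_p$) gives it on the rationals of the constructed dense scale, and continuity of the parametrization together with density of this scale propagates the identification to all parameters. Third, I would perform the analogous analysis for the stabilizer of $0$, which by dimension count must be a complementary one-parameter subgroup of ``scalings'' $S_{c_1}$. Conjugation of translations by scalings, $S_{c_1}\circ T_{c_2}\circ S_{c_1}^{-1}$, is again a translation, and reading off its parameter defines a continuous action of $S$ on the already-identified additive line; the same iteration-plus-density argument then forces this action to be ordinary multiplication. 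Composing a translation and a scaling yields the form $x' = c_1\times x + c_2$, and any element of the group decomposes uniquely this way because scalings fix $0$ while translations are determined by the image of $0$.

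The main obstacle will be the second step: teasing standard addition out of the abstract composition law using only the resources Brouwer allows on the measurable continuum. Classically one appeals to the fact that a continuous homomorphism $(\numreal,+)\to(\numreal,+)$ is linear, but in the intuitionistic setting, where a point is a developing sequence of nested rational intervals, ``continuity of the parametrization'' has to be handled so that the identification made on the dense rational scale is transported to arbitrary choice-sequence-valued parameters without smuggling in a classical extension principle. Getting the right notion of continuity and of uniformity — strong enough to determine $\oplus$ and $\otimes$ from their rational behaviour, but not so strong as to trivialise the claim — is where the real work lies.
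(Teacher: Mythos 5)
The paper does not actually prove this theorem: it is stated as a historical attribution with a bare citation of Brouwer's dissertation (pp.~32--33), and the surrounding text only uses it to conclude that the pseudo-continuum is not measurable. So there is no proof of record to compare yours against; I can only judge your reconstruction on its own terms and against Brouwer's known framing.

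On those terms the shape is right. Brouwer's treatment is explicitly group-theoretic (the paper itself notes that \(+\) and \(\times\) ``are understood group-theoretically, that is, as parametrised transformation operations \(+\,a\) and \(\times\, a\)''), and your Lie-style decomposition into a simply transitive translation subgroup, a stabilizer acting on it by conjugation, and a Cauchy-functional-equation argument over the dense rational scale is faithful to that. Two concrete caveats. First, isolating the translations as ``the transformations without fixed points'' presupposes that this set together with the identity is closed under composition; that holds for the affine group you are aiming at, but it must be \emph{derived} from the hypotheses on a two-parameter continuous uniform group rather than read off --- the standard route is to exhibit the translations structurally (e.g.\ as the commutator subgroup, or as the kernel of the induced action at infinity), and without some such argument your first step is circular. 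Second, the difficulty you flag in extending the identification from the rational scale to the whole continuum is less acute for fidelity to Brouwer 1907 than you suggest: as the paper records, Brouwer at that date still accepted the principle of the excluded middle, and points of the measurable continuum are by construction given as nested sequences of rational intervals, so the classical ``continuous solutions of Cauchy's equation are linear'' step goes through essentially unchanged; the intuitionistic scruples about extension principles belong to the later, choice-sequence period, not to the dissertation.
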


On the pseudo-continuum
he then defines
a 
two-parameter continuous uniform
transformation group
that preserves
\(+\)
and
\(\times\)
on the embedded one-dimensional continuum,
but whose multiplication
is not commutative on the
pseudo-continuum as a whole.

The group operation
\(+\)
on the pseudo-continuum
is induced by the 
\(+\)
operation on the scale of each of the coordinates.
It is associative and commutative.
The operation
\(\times\)
on the pseudo-continuum
is defined as an operation that shifts coordinates;
\(1_1\times\)
shifts the number of a coordinate to the right by one 
while mapping the 1-points of the scales of each coordinate onto one another.
Likewise,
\(1_ω\times\)
shifts the number of a coordinate 
to the right
by 
\(ω\) 
while mapping the 1-points onto one another.
Brouwer determines conditions on the
the choices of the 1-points
on the scales of coordinates
\(1, ω, ω^2, \ldots\)
that will guarantee that 
\(\times\)
is associative and distributive with
\(+\).

But \(\times\) on the pseudo-continuum
has been defined so as not to be
commutative:
for example,
\[
1_1 \times 1_ω = 1_{ω+1}
\]
but
\[
1_ω \times 1_1 = p_{ω+1}
\]
where 
\(p\)
is the point on the scale of coordinate 
\(1\)
chosen to be the 
\(1\)-point on that scale.
In general,
\(1_{ω+1}\)
and
\(p_{ω+1}\)
are not equal.


Theorem~\ref{grouptheorem} then implies that 
this pseudo-continuum is not a measurable one.
Brouwer remarks that this pseudo-continuum is
not continuous in Dedekind’s sense
(which would have implied it is Archimedean),
but it is in Veronese’s
(\citealt[pp.72–73 ]{Brouwer1907}).%
\footnote{See on this point 
\citealt[69-71]{Ehrlich2006}.}

An objection to the way the pseudo-continuum is constructed
is that it presupposes the Principle of the Excluded Middle:
In order to obtain the linear ordering of its points,
it must be possible to decide whether
a sequence that proceeds infinitely to both sides
is from a certain element onward constant zero to the left.
This is similar to a problem in another part of his thesis,
which is flagged and discussed in the corrections 
that he published in 1917:
When moving down along a branch in a tree,
one cannot,
in general,
decide whether each future node will have
a unique descendant 
(\citealt[p.440]{Brouwer1917A2}).	

At the time,
Brouwer accepted the Principle of the Excluded Middle because he
took 
\(P \vee \neg P\)
to be equivalent to
\(\neg P \rightarrow \neg P\)
\citep[pp.106-107]{Dalen1999}.
Constructively,
it is not;
Brouwer presented the correct reading,
according to which PEM is valid only for decidable propositions,
in \citealt{Brouwer1908C},
‘The unreliability of the logical principles’.%
\footnote{A recent English translation and introduction
is 
\citealt{Atten.Sundholm2017}.}

Moreover,
Brouwer’s construction,
if successful, 
would not be
a field extension of the real numbers,
so it could not have been used to develop a nonstandard analysis.

It is not surprising,
then,
that Brouwer did not develop the theory of this pseudo-continuum any
further.%
The work on non-Archimedean numbers
was superseded by Hahn’s paper
‘\german{Über die nicht\-ar\-chi\-me\-di\-schen Größensysteme}’
\citeyearpar{Hahn1907},
which
appeared just too late to be taken into consideration in 
Brouwer’s thesis,
which was defended on February 19 of the same year.
But 
in 1917,
Brouwer referred to it in his list of additions and 
corrections to his dissertation
specifically for its treatment 
of commutative principal operations
(\citealt[p.441]{Brouwer1917A2}).

One\label{fngoedel} of the few people who seem actually to have studied
Brouwer’s pseudo-continuum is
Kurt Gödel.
In 1941,
by which time he had emigrated to the United States,
he asked his brother
who had remained in Vienna,
to order a copy of Brouwer’s dissertation
for him \citep[pp.190–191]{Atten2015}.
And indeed,
in one of Gödel’s notebooks, 
probably filled in 1942,
one
finds reading notes on Brouwer’s construction.%
\footnote{\citealt{Godel.Papers}, Arbeitsheft 14, pp.21–23;
see its page 14 for the year.}
Gödel at the time was interested
in  the question
if there could be non-human beings 
in whose awareness time is ordered in a  non-Archimedean way
\citep[Max Phil VI (?–July 1942), pp.431–432]{Godel.Papers}.

Brouwer made one last remark on 
non-Archimedean geometry in his second  lecture
in Vienna 1928~– with Hahn,
who had in the meantime become a friend of his, 
in the audience:%
\footnote{Brouwer’s Vienna lectures were invited by a committee of which Hahn was a member  \citep[p.561]{Dalen2005}.}
\begin{quote}
The initial, 
negative attitude towards
these 
[non-Euclidean or non-Archimedean]
geometries
was completely
overcome
by their arithmetisation
due to
Riemann, Beltrami, Cayley,
and,
respectively,
Levi-Civita and Hahn.
In the process,
the peculiar fact came about that
the non-Archimedean continuum,
which
had proved to fulfill the
a priori conditions on the continuum
just as well as the Archimedean,
was brought about in a plausible manner
only with the aid of the latter,
so that the calling into question of the a priori necessity 
of the Archimedean continuum
had to be founded precisely on the a priori consistency
of this continuum.
(\citealt[p.1, trl.~mine]{Brouwer1930A})
\end{quote}

\section{Brouwer: a real number that is greater than \(0\), but not measurably greater}\label{SectionBrouwerRealnumber}
Around 1916,
Brouwer introduced choice sequences
into intuitionistic analysis.%
\footnote{For an introduction to choice sequences,
with particular attention to philosophical and mathematical differences
between Brouwer’s theory and Weyl’s adaptation of it,
see \citealt{Atten.Dalen.Tieszen2002};
for their history, \citealt{Troelstra1982}.}
A choice sequence is
a sequence of natural or rational numbers that
are freely chosen by the Creating Subject,
which is moreover free to
impose restrictions on its choices.
Thus we have
sequences without any restriction on the choices
(lawless sequences)
and
sequences determined by an algorithm or law
(lawlike sequences).
For Brouwer these are the extreme cases,
with many other kinds of choice sequence in between,
notably also choice sequences for which the 
Creating Subject lets its choices depend
on some of its other mathematical activities.
(We will see examples of this latter kind in this section and the next.)

Brouwer’s rationale for reconstructing analysis
in a theory of choice sequences is that this
gives a mathematical,
fully constructive model
of the intuitive continuum
that faithfully mirrors,
not only epistemologically but ontologically, 
the fact that the latter
is not a composition out of discrete elements.

Weyl,
in his intuitionistic period,
accepted the theory of choice sequences in a modified form that,
however,
made it incoherent
\citep[section 3]{Atten.Dalen.Tieszen2002}.
Be that as it may,
of some interest for our present theme is Weyl’s 
intention to accept universal quantification over 
lawless sequences but to insist that instantiations
are lawlike,
for in this way he is in effect treating
lawless sequences
as 
nonstandard objects.
For Weyl,
only lawlike
choice sequences could exist as individuals.
Brouwer’s particular choice sequence
that is the topic of the present section
would not have been acknowledged 
as
an individual mathematical object
by Weyl.

The introduction of choice sequences
did not affect early Brouwer’s observation on the
impossibility to construct a non-Archimedean scale.
The reason is that the latter observation
is made at such a high level of generality that
it also subsumes
choice sequences.
Yet,
in the 
\textit{Cambridge Lectures} 
(1946–1951)
Brouwer states that
\begin{quote}
the intuitive ‘between’ surely requires 
as well that the continuum contains further point cores
between,
for instance,
the origin on the one hand and all rational point cores on the other.
\citep[p.50]{Brouwer1981A}
\end{quote}
A ‘point core’ is an equivalence class (or rather a ’species’)
of choice sequences,
the criterion being that they are all
co-convergent.
Brouwer seems to be saying here that one can construct points  that are not 0 yet
whose distance from 0 is smaller than any rational number we will ever construct;
he seems to be saying that we can construct infinitesimals.
It will turn out that this is not quite what is meant.%
\footnote{The same construction is also in \citealt{Brouwer1948A},
but there Brouwer does not add the comment quoted above.}

To show Brouwer’s argument for this claim,
his definitions of some order relations are needed
\citep[p.1246n]{Brouwer1949C}.
Let
$β$ 
and 
$γ$
be two real numbers,
i.e., two convergent infinite sequences of rational numbers
$β(n)$
and
$γ(n)$.
Define
\(β\klo γ\),
‘\(β\) is measurably smaller than \(γ\)’
as
\[
\exists m,n\in\numnat\forall v\in\numnat\left(v \geq m \rightarrow γ(v)-β(v) > \frac{1}{2^n}\right)
\]
Correspondingly,
\(γ \gro β\)
means that
\(γ\) 
is 
‘measurably greater’
than
\(β\)
\citep[p.3]{Brouwer1951}. 
Write
\begin{align*}
& β \neq γ \quad \text{ for } \quad\neg(β=γ)\\
& β \geq γ \quad\text{ for }  \quad\neg(β \klo γ)\\
& β > γ \quad\text{ for } \quad β \geq γ \wedge β \neq γ
\end{align*}
So being measurably greater,
defined as a double existential statement, 
is a positive property,
while being greater,
defined as in effect a conjunction of two negations, 
is a negative property.

The apartness relation 
\citep[p.1246n]{Brouwer1949C} 
is defined as follows: 
\[
β \apart γ \equiv \exists k\in\numnat\left(|β-γ|>\frac{1}{2^k}\right)
\]
or,
equivalently,
\[
β \apart γ \equiv β \klo γ \vee β \gro γ
\]

The definition of a choice sequence,
and so in particular of a point core that it represents, 
may be made
to depend on what goes on in the subject’s other activities
in between the choices of the elements in this sequence,
notably with respect to attempts to settle a certain
problem.
For example,
in between two choices,
the subject may have
decided a proposition
\(P\),
or have
tested it.
A proposition 
\(P\)
is decided
by either proving
\(P\)
or
proving
\(\neg P\);
it is tested
by either proving
\(\neg P\)
or
\(\neg\neg P\).
Decidability implies testability.
If 
\(P\) 
holds,
then so does
\(\neg\neg  P\),
and if 
\(\neg P\),
then 
\(\neg P\);
so
\(P \vee \neg P\)
implies
\(\neg P \vee \neg\neg  P\).
But testability does not imply decidability.
For suppose we
can
prove
\(\neg\neg  P\)
but not 
(yet) 
\(P\);
then
\(P\) 
has been tested
but is still undecided.

\begin{wce}[Brouwer]
\label{Brouwerweakcounterexample}
There is no hope of showing that
$\forall α(α > 0 \rightarrow α \gro 0)$.%
\footnote{In the next section,
we will see that Brouwer also had a proof of the actual negation,
\(\neg\forall α(α > 0 \rightarrow α \gro 0)\) (Theorem~\ref{Brouwerstrongcounterexample}).}
\end{wce}

\begin{pargmt}
Let
$P$
be
a proposition
that
we cannot test,
in the 
weak sense that we do not now possess a proof of
$\neg P \vee \neg\neg  P$.%
\footnote{Brouwer could have given this argument in terms
of an undecidable proposition instead
of an untestable one.
The reason he uses an untestable
one is that in his paper he exploits almost the same construction
to prove that
$\neq$
cannot be defined as a disjunction of
$<$
and
$>$,
as that would lead to the contradiction that an untestable proposition is testable.
For further discussion of Brouwer’s weak and strong counterexamples,
see 
\citealt{Atten-forthcoming}.}

The Creating Subject constructs a real number $α$ in a
choice
sequence of rational numbers $α(n)$, as follows:
\begin{itemize}
\item As long as,
when making the choice of $α(n)$,
the Creating Subject
has obtained evidence neither of $P$
nor of $\neg  P$,
$α(n)$ is chosen to be 0.

\item If between the choice of $α(n-1)$ and
$α(n)$,
the Creating Subject
has obtained evidence
of $P$,
or has obtained evidence
of $\neg  P$,
$α(n)$ and all $α(n+k)$ are
chosen to be
$(\frac{1}{2})^{n}$.
\end{itemize}
The choice sequence $α$ converges,
hence α is a real number.%
\footnote{Let 
\(ε\)
be given,
and determine an
\(n\)
such that
\(2^{-n} < ε\).
Construct the sequence
\(α\)
up to
\(α(n)\),
which can be done as each choice is decidable.
If
\(α(n)=0\),
all further choices will be in the interval
\([0,2^{-(n+1)}]\)
and hence within
\(ε\)
from one another.
If
\(α(n) \neq 0\),
then the choices in
\(α\)
have already been fixed,
and hence within
\(ε\)
from one another.}
We have
\[
α = 0 \leftrightarrow \neg P \wedge \neg\neg P
\]
Hence $α \neq 0$. 
We also have
$\neg(α \klo 0)$
because, 
by definition of $α$,
no
$α(n)$ is ever smaller than 0;
and their conjunction gives
$α > 0$.

But we do not have the stronger
$α \gro 0$
because
if we had,
then
\[
\exists m,n\in\numnat\forall v\in\numnat\left(v \geq m \rightarrow α(v) > \frac{1}{2^n}\right)
\]
and this is only possible if
$P$ 
would have been decided,
and hence tested;
but this contradicts
the hypothesis that
$P$ 
cannot be tested yet.
\end{pargmt}

This is what Brouwer means when,
in the quotation from the 
\textit{Cambridge Lectures}
above, 
he says that
there are point cores between
the origin and all positive rational point cores.
If,
by developing more mathematics,
we do come in a position to test
$P$,
that is we
can find a proof of 
$\neg P$
or a proof of
$\neg\neg  P$,
then the number 
$α$
becomes rational,
and,
in the sense of these order relations,
no longer lies 
between
0 and all rational point cores.
Note that
this does not mean that
$α$
was irrational before.%
\footnote{Before,
it was a growing construction for a real number
that had  yet acquired neither the property
of being rational,
nor that of being irrational.}

Brouwer does not go on to connect
his example of 
a number between 0 and all the rationals
in any way
to infinitesimals.
That would be done by 
Vesley.

\section{Vesley’s \(α\)-infinitesimals}\label{SectionVesley}
Vesley realised that 
real numbers like the one in Brouwer’s example
are,
although not infinitesimals in an ontological sense, 
in an important respect similar to infinitesimals
\citep{Vesley1981}.%
\footnote{I don’t think Vesley knew of that particular passage in Brouwer,
which was published only in 1981,
but
he was of course very familiar with this kind of reasoning,
e.g.~\citealt{Kleene.Vesley1965}.}
The same observation was made independently
in
\citealt[p.191]{Dalen1988}.

Vesley appeals to 
Kripke’s Schema:
\begin{equation}
\exists α(\exists n\, α(n)=1 \leftrightarrow
P)
\tag{KS}
\end{equation}
where 
\(P\)
is a variable for propositions,
and 
\(α\)
for choice sequences.
Brouwer had demonstrated this 
before Kripke did
but never used it again,
and instead reasoned from the general principles 
from which KS quickly follows.%
\footnote{See \citealt[p.4]{Brouwer1954A}
for Brouwer’s demonstration,
and
\citealt[p.295]{Myhill1966}
for the observation that this is KS.
Brouwer does not literally state KS;
he constructs,
from an arbitrary proposition
$P$
that as yet cannot be tested,
an infinite sequence
$C(γ,P)$,
and shows that 
truth of 
$P$
and 
rationality of 
$C(γ,P)$
are equivalent.
However,
the construction
of a witness for KS from
$C(γ,P)$
is immediate;
and Brouwer's reasoning towards the existence
of 
$C(γ,P)$
goes through for any
$P$,
not only untestable ones.}
These principles were later codified by Kreisel
in the so-called 
‘Theory of the Creative Subject’ (or ‘Creating Subject’).
For discussion
of the Creating Subject and KS,
see
\citealt{Myhill1966};
\citealt{Kreisel1967b};
\citealt[ch.4]{Troelstra.Dalen1988};
\citealt[ch.5]{Atten2004};
and
\citealt{Atten-forthcoming}.

I will adapt Vesley’s construction somewhat  to Brouwer’s way.
Let
\(α\)
be a choice sequence
(whether of convergent rationals or not).
Define
a real number
\(x\)
as follows:
\begin{itemize}
\item As long as,
when making the choice of 
\(x(n)\),
the Creating Subject
has obtained evidence of neither 
\(\forall n α(n)=0\)
nor of
\(\neg\forall n α(n)=0\),
\(x(n)\) 
is chosen to be 0.

\item If between the choice of 
\(x(n-1)\)and
\(x(n)\),
the Creating Subject
has obtained evidence
of 
\(\forall n α(n)=0\),
\(x(n)\) and all 
\(x(n+k)\)
are
chosen to be
\((\frac{1}{2})^{n}\).

\item If between the choice of 
\(x(n-1)\) 
and
\(x(n)\),
the Creating Subject
has obtained evidence
of 
\(\neg\forall n α(n)=0\),
\(x(n)\) 
and all 
\(x(n+k)\) 
are
chosen to be
\(-(\frac{1}{2})^{n}\).
\end{itemize}

Then we have
\[
\exists x \in \numreal[(x \gro 0 \leftrightarrow \forall n α(n)=0) \wedge (x \klo 0 \leftrightarrow \neg\forall n α(n)=0)]
\]
and,
since 
\(α\)
was an arbitrary choice sequence,
\[
\forall α\exists x\in \numreal[(x \gro 0 \leftrightarrow \forall n α(n)=0) \wedge (x \klo 0 \leftrightarrow \neg\forall n α(n)=0)]
\]

Define for every choice sequence
the species of real numbers
\(L(α)\)
and
\(M(α)\):
\begin{align*}
x  \in L(α) & \equiv [x \apart 0 \leftrightarrow \forall n α(n)=0 \vee \neg\forall n α(n)=0]\\
x \in M(α) & \equiv \exists y \in L(α) \neg(\lvert x\rvert \gro \lvert y\rvert)
\end{align*}
Vesley points out that  one can then prove:

\begin{theorem}
\(M(α)\) is a subring of the intuitionistic \(\numreal\).
\end{theorem}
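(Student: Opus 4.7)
The plan is to verify the (non-unital) subring axioms for $M(\alpha)$: that it contains $0$ and is closed under negation, addition, and multiplication. Write $Q$ for $\forall n\, \alpha(n)=0 \vee \neg\forall n\, \alpha(n)=0$, so that $L(\alpha) = \{y \in \numreal : y \apart 0 \leftrightarrow Q\}$; this species is inhabited, by the construction displayed just before its definition (the choice sequence that remains $0$ until evidence for $Q$ appears). I will also write $a \leq b$ as shorthand for $\neg(a \gro b)$, the weak, negatively defined inequality. The two easy clauses are immediate: $0 \in M(\alpha)$, because for any $y \in L(\alpha)$ we have $0 \leq |y|$; and whenever $y$ witnesses $x \in M(\alpha)$, the same $y$ witnesses $-x$, since $|-x| = |x|$.

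For closure under addition, suppose $x_i \in M(\alpha)$ with witnesses $y_i \in L(\alpha)$ for $i = 1, 2$. First I would prove the auxiliary closure property that $|y_1| + |y_2| \in L(\alpha)$. The direction $Q \rightarrow (|y_1| + |y_2|) \apart 0$ is immediate from $Q \rightarrow y_1 \apart 0$ together with $|y_1| + |y_2| \geq |y_1|$. For the converse, suppose $|y_1| + |y_2| \gro 1/2^k$. By cotransitivity of $\gro$ applied to $|y_1|$ against the rational thresholds $1/2^{k+2} < 1/2^{k+1}$, either $|y_1| \gro 1/2^{k+2}$, yielding $y_1 \apart 0$ and hence $Q$, or $|y_1| \klo 1/2^{k+1}$, in which case $|y_2| \gro 1/2^{k+1}$, yielding $y_2 \apart 0$ and again $Q$. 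Combining this auxiliary lemma with the triangle inequality $|x_1 + x_2| \leq |x_1| + |x_2|$ and monotonicity of $\leq$ under addition yields $|x_1 + x_2| \leq |y_1| + |y_2|$, so $x_1 + x_2 \in M(\alpha)$.

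For closure under multiplication, the analogous auxiliary fact is that $y_1 y_2 \in L(\alpha)$. In intuitionistic $\numreal$ one has $(y_1 y_2) \apart 0 \leftrightarrow (y_1 \apart 0 \wedge y_2 \apart 0)$, where the nontrivial direction uses a computable upper bound on each factor; since $y_i \in L(\alpha)$, the right-hand side is equivalent to $Q \wedge Q$, i.e., $Q$. Combined with $|x_1 x_2| = |x_1||x_2|$ and monotonicity of $\leq$ under multiplication of nonnegative reals, this gives $|x_1 x_2| \leq |y_1 y_2|$, so $x_1 x_2 \in M(\alpha)$.

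The main obstacle, I expect, is the careful intuitionistic handling of the weak order $\leq = \neg(\gro)$: its preservation by $+$ and by $\cdot$ on nonnegative reals, together with the forward direction of the apartness biconditional for $|y_1| + |y_2|$, all ultimately reduce to cotransitivity of $\gro$ on $\numreal$. I would isolate these as independent lemmas of intuitionistic real analysis and invoke them in each of the four clauses above.
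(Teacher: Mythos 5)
The paper gives no proof of this theorem: it is stated as a result one "can then prove" and is simply cited from Vesley (1981), so there is no argument in the text to compare yours against. On its own terms your verification is correct: the witnesses $\lvert y_1\rvert+\lvert y_2\rvert$ for sums and $y_1y_2$ for products, the reduction of their membership in $L(\alpha)$ to the constructive equivalences $(\lvert y_1\rvert+\lvert y_2\rvert)\apart 0 \leftrightarrow (y_1\apart 0 \vee y_2\apart 0)$ and $y_1y_2\apart 0\leftrightarrow(y_1\apart 0\wedge y_2\apart 0)$ (both sides then being equivalent to $Q$), and the use of cotransitivity of $\gro$ to push the weak bounds $\neg(\lvert x_i\rvert\gro\lvert y_i\rvert)$ through addition and through multiplication of nonnegative reals are exactly the standard intuitionistic lemmas required. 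Your decision to check only the non-unital subring axioms is also forced: if $1\in M(\alpha)$ held for every $\alpha$, the subsequent theorem $\neg\forall\alpha\exists x\in M(\alpha)\exists n(n\cdot x\gro 1)$ would be refuted.
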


\begin{theorem}
\(\neg\forall α \exists x\in M(α) \exists n (n \cdot x \gro 1)\)
\end{theorem}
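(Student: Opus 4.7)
The plan is to argue by contradiction: assume $\forall α\,\exists x \in M(α)\,\exists n(n\cdot x \gro 1)$ and derive, via the defining biconditional of $L(α)$, weak excluded middle for $\Pi_1^0$-statements about choice sequences, which I then refute with Brouwer's continuity principle on Baire space.

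Given an arbitrary $α$, I would first unpack the assumption to obtain $x \in M(α)$, $n \in \numnat$, and a witnessing $y \in L(α)$ satisfying $n\cdot x \gro 1$ and $\neg(|x| \gro |y|)$. From $n\cdot x \gro 1$ one has $x \gro 1/n$, so $|x| = x \gro 1/n$. A routine estimate then yields $\neg(|y| \klo 1/(2n))$: indeed, if $|y| \klo 1/(2n)$ held, combining it with $|x| \gro 1/n$ would give $|y| \klo |x|$, i.e.\ $|x| \gro |y|$, contradicting the witnessing property. The \emph{main obstacle} is promoting this weak lower bound $\neg(|y| \klo 1/(2n))$ to the positive apartness $y \apart 0$, for then the biconditional defining membership in $L(α)$ would deliver the testability disjunction $\forall n\,α(n)=0 \vee \neg\forall n\,α(n)=0$. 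Constructively, the weak bound plus the Cauchy property of $|y|$ yields only $\neg\neg(y \apart 0)$; bridging to $y \apart 0$ should exploit the Creating-Subject-style structure imposed on $y \in L(α)$, so that an initial-segment witness of $\neg(|y| \klo 1/(2n))$ can be read back as a decision of the testability disjunction for the initial segment of $α$ consumed so far. I expect this to be the most delicate piece of the argument and possibly to require an invocation of continuous choice for sequences applied directly to the $α \mapsto (x,n,y)$ dependence.

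Granting the bridging step, the assumption yields $\forall α(\forall n\,α(n)=0 \vee \neg\forall n\,α(n)=0)$. To refute this, countable choice produces a selector $F:\numnat^\numnat \to \{0,1\}$ picking the disjunct; by Brouwer's continuity principle, $F$ is continuous on Baire space. Evaluating at $α_0 = 0,0,0,\ldots$ gives $F(α_0) = 0$, so continuity supplies some $m$ with $F(β) = 0$ for every $β$ extending the initial segment $0^{m}$. But $β := 0^{m} \cdot 1 \cdot 0,0,0,\ldots$ extends $0^{m}$ while satisfying only the second disjunct, a contradiction. This refutes the original universal and establishes $\neg\forall α\,\exists x \in M(α)\,\exists n(n\cdot x \gro 1)$.
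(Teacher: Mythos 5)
Your overall route is the right one, and it is in fact the only route the paper suggests: the text states this theorem without proof (attributing it to Vesley), but the surrounding discussion — that $M(α)$ is non-Archimedean exactly as long as $\forall n\, α(n)=0$ is undecided — makes clear that the intended argument is precisely your reduction of the hypothesis to $\forall α(\forall n\, α(n)=0 \vee \neg\forall n\, α(n)=0)$, followed by the standard continuity refutation of that universal decidability claim. Your unpacking of $M(α)$, the estimate $\neg(\lvert y\rvert \klo \tfrac{1}{2n})$, and the final weak-continuity argument at $α_0 = 0,0,0,\dots$ versus $0^m\cdot 1\cdot 0^{ω}$ are all correct.

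The one thing to fix is your assessment of the ``main obstacle'': it is not an obstacle, and your claim that the weak bound only yields $\neg\neg(y \apart 0)$ is too pessimistic. For Brouwer's reals (convergent sequences of rationals, hence with a constructively available modulus), cotransitivity of $\klo$ holds: from $a \klo b$ one gets, for any $c$, $a \klo c \vee c \klo b$. Applying this with $0 \klo \tfrac{1}{2n}$ and $c = \lvert y\rvert$ gives $0 \klo \lvert y\rvert \vee \lvert y\rvert \klo \tfrac{1}{2n}$; your estimate refutes the second disjunct, and since the disjunction was obtained positively (by a decidable comparison of rational approximants at a stage given by the modulus), the first disjunct follows outright, i.e.\ $y \apart 0$. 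No appeal to the Creating-Subject structure of $y$ and no continuous choice applied to $α \mapsto (x,n,y)$ is needed at this point; the biconditional defining $L(α)$ then delivers the testability disjunction directly, and the rest of your argument goes through as written.
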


The species 
\(M(α)\)
is called that of the
α-infinitesimals.
These are
elements of 
(intuitionistic)
\(\numreal\),
and in this sense the conception is a little like
that in Nelson’s IST,
where 
the infinitesimals are
elements of
classical \(\numreal\).

Vesley observes 
that,
although we want to think of the α-infinitesimals as very small,
and for that reason give them this suggestive name,
should the question whether
α is 
\(0\)
everywhere be decided,
\(M(α)\)
becomes
\(\numreal\).
This
goes against the very idea of
an infinitesimal.
On the one hand,
in light of open-endedness of mathematics,
there will always be new open problems,
so this is not much of an objection
against the existence of α-infinitesimals in general.
On the other hand,
this also means that
these infinitesimals only behave as infinitesimals under universal quantification.
We cannot prove that all
species of α-infinitesimals are non-Archimedean;
only that it is not the case that none of them is.
Instead of
\[
\forall α\neg\exists x\in M(α)\exists n (n\cdot x \gro 1)
\]
we only have
\[
\neg\forall α\exists x\in M(α)\exists n (n\cdot x \gro 1)
\]
And that the latter cannot be strengthened to the former
is intrinsic to the whole construction.
An individual 
$M(α)$
will be non-Archimedean
as long as it is undecided whether 
the values of 
$α$
are 
\(0\) 
everywhere or not,
but becomes Archmedean as soon as this has been decided.
And there cannot be a particular
$α$
for which this is never decided,
for that would imply the existence
of an absolutely undecidable proposition,
which is impossible:%
\footnote{One might think the permanent existence of an
α-infinitesimal 
can be assured 
by starting a sequence starting with 
\(0\)’s
and stipulating that one will 
never make the decision between 
(a)
restricting the remaining choices
to 
\(0\)
and
(b) 
making a choice that is not 
\(0\).
This however will not do,
because by choosing 
\(0\) 
until that
decision is made,
and at the same time
resolving always to postpone that decision,
the result is that the sequence will be constant 0.}

\begin{theorem}[Brouwer, 1907–1908?]
There exist no absolutely undecidable propositions.
\end{theorem}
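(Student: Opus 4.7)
The plan is to derive a contradiction from the assumption that some proposition \(P\) is \emph{absolutely} undecidable, i.e.\ that we possess a standing guarantee that neither a proof of \(P\) nor a proof of \(\neg P\) will ever be produced by the Creating Subject. The contradiction will take the form \(\neg P \wedge \neg\neg P\), obtained by exploiting both clauses of absolute undecidability separately against a Creating-Subject-style reflection principle of the sort used implicitly throughout the preceding section.

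First I would unpack the guarantee that \(P\) will never be proved. Under the Brouwerian reading of negation, a proof that the assertion \(P\) can never be constructed already \emph{is} a proof of \(\neg P\): for if we were to assume \(P\), that very assumption would contradict the blanket impossibility of having a proof of \(P\) at any stage of the Creating Subject's mathematical activity. Hence the first half of absolute undecidability yields \(\neg P\). Second, I would apply the identical move to the second clause: the guarantee that \(\neg P\) will never be proved is, by the same reading, a proof of \(\neg\neg P\). Conjoining, \(\neg P \wedge \neg\neg P\), which is an immediate intuitionistic contradiction, closing the argument.

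The hard part will be justifying the pivotal step ``(we shall never prove \(Q\)) \(\;\Rightarrow\;\neg Q\)''. One must be careful to distinguish this principle from the easier and already classically valid theorem \(\neg\neg(P \vee \neg P)\): that theorem rules out the negation of the disjunction but does not on its own rule out perpetual undecidedness, so some form of reflection on the Creating Subject's total mathematical activity is genuinely doing work here. I would therefore either (a) argue informally in Brouwer's own 1907--1908 style, where the step is read off from the meaning of \(\neg\) combined with a tacit assumption that mathematical truth is exhausted by the Subject's constructions, or (b) formalise it via a schema akin to \(\neg\exists n\,\Box_n Q \rightarrow \neg Q\) in the Theory of the Creating Subject codified later by Kreisel, from which the two-line deduction above is entirely mechanical. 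Option (b) is philosophically cleaner and fits the apparatus already deployed in the discussion of \(\alpha\)-infinitesimals, so that is the presentation I would adopt.
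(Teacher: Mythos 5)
Your proposal is correct and takes essentially the same route as the paper, whose proof is just Brouwer's own note: a demonstration that \(P\) can never be proved must proceed by reductio from the assumption that \(P\) has been decided in a given sense, and is therefore itself a proof of the opposite sense, so the proposition is decided after all. The only difference is presentational — you run this move on both clauses to land on the explicit contradiction \(\neg P \wedge \neg\neg P\) and flag the underlying reflection principle, whereas Brouwer applies it once and simply observes that the proposition has thereby been decided.
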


\begin{proof}
‘Can one ever demonstrate of a proposition, that it can never
be decided? No, because one would have to so by reductio ad
absurdum. So one would have to say: assume that the
proposition has been decided in sense \emph{A}, and from that
deduce a contradiction. But then it would have been proved
that not-\emph{A} is true, and the proposition is decided after
all.’ (Note by Brouwer, as quoted in 
\citealt[p.174 note a]{Dalen2001a}; translation mine)
\end{proof}
That $\neg(P \vee \neg P)$ is contradictory,
and hence that PEM is consistent,
was pointed out in \citealt{Brouwer1908C}.
The quoted argument was never published by Brouwer,
but 
\citealt[p.66]{Wavre1926}
and 
\citealt[p.16]{Heyting1934}
made
the same observation.

Van Dalen,
who as mentioned made the same connection between Brouwer’s
weak counterexample and infinitesimals,
considered it
‘highly unsatisfactory to include subjective phrases such as
“it cannot be shown” in mathematical texts’
\citep[p.191]{Dalen1988},
and points out that
in
\citealt{Brouwer1949A}  
this result is strengthened from the weak counterexample
‘there are real numbers 
\(α\)
that are greater than  
\(0\)
yet cannot be shown to be measurably greater than 
\(0\)’
to the strong counterexample
\begin{theorem}[Brouwer 1949]\label{Brouwerstrongcounterexample}
\(\neg\forall α(α > 0 \rightarrow α \gro 0)\)
\end{theorem}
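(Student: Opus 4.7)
The plan is to proceed by reductio, lifting the weak counterexample of Weak Counterexample~\ref{Brouwerweakcounterexample} to a strong one by replacing the appeal to a particular untestable proposition $P$ with a uniform quantification over all propositions, and then refuting the universal decidability principle that results.

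Assume for contradiction that $\forall α(α > 0 \rightarrow α \gro 0)$. For an arbitrary proposition $Q$, carry out the construction of Weak Counterexample~\ref{Brouwerweakcounterexample} with $Q$ in the role previously played by $P$, producing a real $α_Q$. The argument there that $α_Q \neq 0$ rests only on the intuitionistic absurdity of $\neg Q \wedge \neg\neg Q$ and therefore makes no testability assumption about $Q$; combined with $\neg(α_Q \klo 0)$, immediate from the pointwise nonnegativity of the chosen rationals, this gives $α_Q > 0$ unconditionally. The hypothesis then forces $α_Q \gro 0$, i.e.\ there exist $m,n\in\numnat$ with $α_Q(v) > 1/2^n$ for all $v \geq m$; but by the construction of $α_Q$ this can hold only if by stage $m$ the Creating Subject has already proved $Q$ or has already proved $\neg Q$. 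Hence $Q \vee \neg Q$ for every $Q$, that is, $\forall Q(Q \vee \neg Q)$.

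The final, and hardest, step is to extract from this universal form of the principle of excluded middle an outright intuitionistic contradiction, which is precisely where the argument genuinely upgrades a weak counterexample to a strong one. The natural route is to specialise $Q$ to the statement that a binary choice sequence $β \in 2^\numnat$ is identically $0$, thereby deriving the limited principle of omniscience, and then to invoke Brouwer's continuity principle for choice sequences, which rules out any uniform decision of constancy on $2^\numnat$. An alternative closer to Brouwer's own 1949 practice uses the Creating Subject together with Kripke's Schema to exhibit a specific $Q$ for which $Q \vee \neg Q$ is absurd. Either refutation contradicts the derived PEM and thereby the initial assumption, establishing the theorem; the main technical content resides in this last continuity or Creating-Subject step, while the intermediate reduction to PEM is a straightforward uniform recasting of the Weak Counterexample~\ref{Brouwerweakcounterexample} construction.
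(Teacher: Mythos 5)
Your main line is essentially correct but follows a genuinely different route from the paper's. The paper, reporting Brouwer's 1949 demonstration, does not pass through full PEM: it runs the Creating Subject construction over the concrete family of problems ``\(x \in \numrat\)'' for \(x\) ranging over a fan coinciding with \([0,1]\), applies the hypothesis to each resulting real, and then invokes the fan theorem to conclude that all these rationality questions would have to be settled on the basis of initial segments of \(x\) of one uniform length --- contradictory, since no finite initial segment of a real determines its rationality. Your version replaces the fan theorem by the weak continuity principle: you reduce the hypothesis to \(Q \vee \neg Q\) for arbitrary \(Q\) (correct --- the \(α_Q > 0\) part of the weak counterexample indeed uses nothing about the testability of \(Q\)), specialise to \(Q_β \equiv \forall n\,β(n)=0\) to obtain LPO, and refute LPO by continuity. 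That is a legitimate and more modern decomposition, close to van Dalen's way of presenting the strong counterexamples; what Brouwer's route buys is a specific, mathematically meaningful family of problems and the uniformity supplied by the fan theorem, while yours cleanly separates the logical content (hypothesis implies omniscience over a choice-sequence-indexed family) from the single continuity input needed to refute it.

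There is, however, one genuine error: your proposed alternative final step --- ``exhibit a specific \(Q\) for which \(Q \vee \neg Q\) is absurd'' --- cannot be carried out. \(\neg\neg(Q \vee \neg Q)\) is an intuitionistic theorem for every individual \(Q\), and the paper itself records both this fact and Brouwer's argument that there are no absolutely undecidable propositions; this is also why the theorem does not yield \(\exists α\,\neg(α>0 \rightarrow α \gro 0)\). Only the \emph{universally quantified} excluded middle over a family indexed by choice sequences can be refuted, and only by a continuity or fan principle. So your first route is not merely ``natural'' but the only viable one of the two you offer; the Creating-Subject-plus-Kripke's-Schema alternative should be struck.
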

\noindent
(Note that intuitionistically this does not imply
\(\exists α\neg(α > 0 \rightarrow α \gro 0)\),
which is contradictory.)
Indeed,
in the demonstration of this theorem,
instead of one open problem and unbounded time to solve it,
Brouwer considers the infinity of 
open and solved problems
‘\(α \in \numrat\)’
for all
\(α \in [0,1]\)
with
the added condition,
which arises from the applicability of the fan theorem to 
functions defined on that interval,
that they should all
be solved on the basis of an initial segment of
\(α\)
of uniform length.%
\footnote{Strictly speaking,
Brouwer does not consider the question of rationality of each
\(α \in [0,1]\),
but of each
\(α \in J\),
where 
\(J\)
is a fan that coincides with 
\([0,1]\).
Note also that our notational use of
α is different from that in \citealt{Brouwer1949A}.}
And then not only it cannot be shown that this condition can be met
(weak negation),
it can be shown that it is leads to contradictions if it could
(strong negation).

Vesley observes that in this version of nonstandard analysis,
‘the elegance of classical nonstandard analysis is missing
for the familiar reason that more distinctions must be recognized
intuitionistically’ \citep[p.211]{Vesley1981},
but because of the dependency of his infinitesimals on universal quantification
he also sees a similarity
to  the synthetic differential geometry of Lawvere and Kock,%
\footnote{There,
‘nilpotents’,
which are  
\(δ\) 
such that 
\(δ \neq 0\) 
but 
\(δ^2 = 0\), 
may be cancelled when universally quantified.}
which has been developed much further \citep{Kock2006}.
From a Brouwerian perspective,
Vesley’s approach would philosophically be preferable to 
synthetic differential geometry 
(smooth infinitesimal analysis)
in that the latter involves a postulation 
of the existence of a line segment of infinitesimal length,
which is certainly not given to us in mathematical intuition.%
\footnote{Compare Brouwer’s objection to Veronese’s
postulate above,
p.~\pageref{BrouwerVeronese}.}
Vesley announced a sequel paper to develop the approach further
and to see whether it has advantages of its own.
Unfortunately,
it seems that he gave up on the project.

\section{Reeb: An intuitionistic take on IST}\label{SectionReeb}

An approach to infinitesimals that is
Brouwerian in a 
rather different sense
than that defined by the three desiderata of section~\ref{SectionBrouwerCriteria},
and one that would have been of interest to Weyl,
was proposed and enthusiastically defended
by Georges Reeb 
at the University of Strasbourg.%
\footnote{For Reeb’s 
(philosophy of) 
nonstandard analysis,
see,
in French,
\citealt{Reeb1979, 
Reeb1981, 
Barreau.Harthong1989,
Diener.Reeb1989,
Lobry1989,
Reeb.Harthong1989,
LOuvert1994},
and
\citealt{Salanskis1999}.
There is not much about Reeb’s 
(philosophy of) 
nonstandard analysis in English; 
see  
\citealt{FletcherHrbacekKanoveiKatzLobrySanders2017}
for a few recent remarks.}

The mathematical content of nonstandard analyis
as advocated by him is that of his friend Nelson’s IST,%
\footnote{Nelson has written:
\begin{quotation}
One of the most treasured experiences of my life is my friendship
with Georges Reeb. 
We had many strong discussions together, intuitionist
versus formalist. 
What he created was unique in my experience. 
His rare
spirit, 
gentle but fiercely demanding of the highest standards, 
inspired a group of younger mathematicians with an unmatched ethos of
collegiality. 
And their discoveries are extraordinary.

Reeb found, 
and led others to find, 
not only knowledge and beauty
in mathematics, 
but also virtue.
His insights into the nature of
mathematics will point the way towards the mathematics of the future.
\citep[p.8]{Nelson1996}
\end{quotation}
}
described in section~\ref{SectionClassicalConstructive}.
The originality of Reeb’s approach lies in
the fact that,
instead of construing IST as an axiomatic theory,
in which the predicate 
‘\st’
is taken to be implictly defined
by the axioms,
he proposes a specifically Brouwerian motivation for accepting 
IST as a formal theory.
It is,
he notes,
a train of thought that starts from intuitionistic observations on ZFC and
concludes to ‘the plausibility or the naturalness of IST’ 
\citep[p.151]{Reeb1989};%
\footnote{In 
\citealt[p.153]{Reeb1981},  
he had stated that
his notion of 
naïve numbers leads to ideas that
‘show some analogy with IST’,
and this is what one expects of a motivation in constructive reality
of a distinction in an idealised,
classical formal theory.
Note that Reeb in his writings does not much discuss his philosophical differences with Nelson.
On Nelson’s philosophy of mathematics,
see,
besides his own papers,
also \citealt{Buss2006}.}
and the article he dedicated to giving
his most elaborate account of this,
written with Jacques Harthong,
was,
indeed,
titled
‘Intuitionnisme 84’
\citep{Reeb.Harthong1989}.%
\footnote{1984 is the year in which a first version was written and began to circulate.}
The title was,
of course,
at the same time 
a reference to Robinson’s ‘Formalism 64’ \citep{Robinson1965}.
Just as Robinson asked what formalism could be in 1964,
Reeb and Harthong had a view on what intuitionism could be in 1984.
In addition,
formalism is  an essential component of
Reeb’s approach,
but with a specifically Brouwerian view on it.
On the other hand,
Reeb 
was not a Brouwerian intuitionist,
and did not aspire to be.
Reeb and Harthong  acknowledge the difference
when they speak of
‘the intuitionistic conception (ours just as much as that of Brouwer) …’
\citep[p.52]{Reeb.Harthong1989}.%
\footnote{‘la conception intuitionniste (aussi bien la nôtre que celle de Brouwer) …’}
Also Harthong in his afterword to the 1989 reprint of
Int84 in La mathématique nonstandard
is quite forthcoming on this point \citep[pp.265–267]{Harthong1989}.
Some differences will be touched upon below.

Reeb holds that there is a mathematical reality,
constructive,
indepent of theory,
and which intuitionists aim to describe;%
\footnote{This attitude was later described by Sundholm and myself
as 
the intuitionists’ 
‘ontological descriptivism’,
an attitude they share with Platonists,
the disagreement being over the nature of that reality
\citep[p.71]{Sundholm.Atten2008}.
If we had known the paper by Reeb and Harthong then,
we would surely have taken it into account.}
formal theories such as ZFC and IST 
as constructive objects that
codify
ideal(ised) theories of that reality,
just as we have idealised theories of physical reality
\citep[sections 2 and 5]{Reeb.Harthong1989}.
Accordingly,
the notion of motivation for an axiom takes on a richer sense
that relates the formal axiom to mathematical reality.
That relation need not be as strong as the axiom being
fully interpretable in that reality;
it may rather be construed as an idealisation.
This sounds Hilbertian,
and it is,%
\footnote{Besides the main influence Brouwer, in Reeb one finds quotations or echos from for example Hilbert,
Poincaré, Löwenheim, Skolem, and Von Neumann.}
but it must be remembered
that it was intuitionistic criticism of his earlier program
that led
Hilbert to adopt that particular view,
which itself goes back to Brouwer’s dissertation
\citeyearpar{Brouwer1907}.%
\footnote{Brouwer makes the point in \citealt{Brouwer1928A2}.}
The terms of the formal theory do, as such, not refer,
but if we construe that theory as an idealisation of reality
we must be prepared to act as if they refer to ideal(ised) objects \citep[p.30]{Salanskis1994}.
Consistency or conservativeness of the formal theory is therefore not the whole criterion:
if ZFC is extended with an independent proposition 
\(P\),
or,
alternatively,
\(\neg P\),
Reeb expresses a preference for the one that seems in a sense
closer to mathematical reality than another:
writing about Fermat’s Last Theorem in 1989,
he says that if it turns out to be an undecidable proposition,
one could of course add its negation to ZFC,
but 
‘the intuitionist will consider this choice … far removed
from concrete reality’ 
–~in which by then no counterexample had been found
\citep[p.158]{Reeb1989}.

The importance of Brouwer
in Reeb’s view
is epistemological,
and defined by the insistence
that a formal theory,
even if shown consistent,
cannot,
once constructed,
replace mathematical reality 
\citep[section 4]{Reeb.Harthong1989}, 
and that,
in particular,
accepting the Principle of the Excluded Middle
in the formal theory does not mean
that every problem in mathematical reality
can be solved.
Reeb’s main reference for this is
‘Intuitionistic reflections on formalism’
of 1928,
in which Brouwer writes:
\begin{quotation}\label{BrouwerInsights}
The disagreement over which is correct,
the formalistic way of founding mathematics anew
or the intuitionistic way of reconstructing it,
will vanish,
and the choice
between the two activities be reduced to a matter of taste,
as soon as the following
insights,
which pertain primarily to formalism but were first
formulated in the
intuitionistic literature,
are generally accepted.
The acceptance of these insights is
only a question of time,
since they are the results of pure reflection and hence
contain
no disputable element,
so that anyone who has once understood them must accept
them.
Two of the four insights have so far been understood and
accepted in the
formalistic literature.
When the same state of affairs has been reached with respect
to the other two,
it will mean the end of the controversy concerning the
foundations
of mathematics.

\emph{First insight.}
The differentiation,
among the formalistic endeavors,
between a
construction of the ‘inventory of mathematical formulas’
(formalistic view of mathematics)
and an intuitive
(contentual)%
\footnote{[The role of Brouwer’s ‘contentual’ mathematics’
corresponds to that of Reeb’s ‘mathematical reality’;
but  the former is richer than the latter.]}
theory of the laws of this construction,
as well as the
recognition of the fact that for the latter theory the
intuitionistic mathematics of the set of
natural numbers is indispensable.

\emph{Second insight.}
The rejection of the thoughtless use of the logical
principle of
excluded middle,
as well as the recognition,
first,
of the fact that the investigation of the
question why the principle mentioned is justified and to
what extent it is valid constitutes
an essential object of research in the foundations of
mathematics,
and,
second,
of the fact
that in intuitive
(contentual)
mathematics this principle is valid only for finite systems.

\emph{Third insight.}
The identification of the principle of excluded middle with
the
principle of the solvability of every mathematical problem.

\emph{Fourth insight.}
The recognition of the fact that the
(contentual)
justification of
formalistic mathematics by means of the proof of its
consistency contains a vicious circle,
since this justification rests upon the
(contentual)
correctness of the proposition that from
the consistency of a proposition the correctness of the
proposition follows,
that is,
upon the
(contentual)
correctness of the principle of excluded middle.
(\citealt[p.375]{Brouwer1928A2},
trl.~\citealt[pp.490–491]{Heijenoort1967})
\end{quotation}

The distinction in the First insight Brouwer
had made first in \citealt{Brouwer1908C}, 
in which he had shown that
\(\neg\neg(P \vee \neg P)\),
and in 1924 this led him to comment on 
Hilbert’s aim of a 
consistency proof for formalised classical mathematics
that ‘We need by no means despair of reaching this goal’
(\citealt[p.3]{Brouwer1924N}, 
trl.~\citealt[p.336]{Heijenoort1967}). 
In 
‘Intuitionistic reflections on formalism’
he added
a proof that finite conjunctions of instances of PEM are also consistent,
and in his
first Vienna lecture 
he voiced the expectation that
‘An appropriate mechanization of the language of this intuitionistically non-contradictory 
mathematics should therefore deliver precisely what the formalist school has set as its goal’ 
(\citealt[p.164, trl.~mine]{Brouwer1929A}).

Against this background,
the question that led Reeb
to embrace IST is
(as I formulate it) 
the following.
If
for example
the classical logic in the formal theory is taken to be an idealisation of the constructive logic 
of mathematical reality,
is there, 
similarly, 
an aspect of constructive reality that,
when idealised,
would lead to the notion 
of a standard number in the formal theory?
What is asked for is an intuitive motivation,
for introducing
the predicate ‘\st’ in the idealised theory IST,
not a formal proof 
(of the existence of formal nonstandard models).

Reeb answers that
the standard numbers in a formal theory may be seen as an
idealisation of what he calls the naïve whole numbers 
in reality
(‘\french{les entiers naïfs}’).
They are the numbers
‘that exist independently from the theory one uses to describe them’ 
\citep[p.63]{Reeb.Harthong1989},
and are obtained by just putting units together,
Reeb’s 
(necessarily informal)
definition is
(\citealt[p.277, p.286n3]{Reeb1979};
\citealt[p.152]{Reeb1989}):
\begin{enumerate}
\item \(0\) is naïve; 
\item if \(n\) is naïve, so is \(n+1\);
\item No other \(n\) is naïve.
\end{enumerate}
Although  the property of being naïve is not in any sense vague,
so there is no threat of Wang’s Paradox,%
\footnote{Wang’s Paradox is: 
\(0\)
is a small number;
if
\(n\)
is a small number,
so is
\(n+1\);
therefore,
all numbers are small.
This has generated quite some discussion;
the classical papers are  
\citealt{Dummett1975}
and
\citealt{Wright1975}.}
Reeb resists
the argument by induction that the naïve numbers form a set in the classical sense,
as we will see in a moment.
It should also be noted that Reeb’s
position is dissociated  from finitism:
a naïve number may be constructed in an algorithm or program
of any complexity
\citep[section 15]{Reeb.Harthong1989}. 
It is implied,
then,
that there is also a naïve notion of algorithm.
This coincides with the idea
that formal recursion theory,
if to be understood as a theory of computability,
presupposes
such a pre-theoretical notion.%
\footnote{Briefly,
the point is that a recursive function is defined by a set of equations,
and if the function is to be considered as computable,
there must be an effective method to determine that set;
but now to understand 
‘effective’ as ’ recursive’ would be circular.
A detailed presentation is given 
in
\citealt[pp.~340-342]{Heyting1958A}.
For discussion and further references, see 
\citealt{Coquand2014} 
and
\citealt{Sundholm2014}.}

At this point Reeb invokes Brouwer’s weak counterexamples to classical theorems.
Assume the consistency of the formal theory
and 
suppose that there is a predicate
\(A\)
such that
\begin{itemize}
\item the formal theory proves \(\exists x \neg A(x)\),
or it can be shown that \(\exists x \neg A(x)\) is independent and we are willing to add it as an axiom;
\item \(A\) and its evaluation at a naïve \(n\) can be understood in naïve terms,
\item but we do not have yet a naïve construction for such a counterexample.
\end{itemize}
In that case ZFC proves the formal existence of a natural number
for which we do not have a construction in reality yet.
It is a formal number in the formal set 
\(\numnat\)
to which corresponds no naïve number in reality
(in any case not yet),
and which is greater than any naïve number interpreted in the theory.
Hence Reeb’s slogan
Q: ‘The naïve whole numbers do not fill \(\numnat\)’
(‘\french{Les naifs ne remplissent pas \(\numnat\)}’).
At first,
Reeb called Q 
an observation
(‘\french{constat}’),
later a slogan
(‘\french{slogan}’).
As  Salanskis points out,
the latter is much more appropriate,
as seeing things the way Reeb does
is not a matter of direct perception
but requires accepting a certain philosophical view
\citep[p.29]{Salanskis1994}.
Furthermore,
it requires the conviction that there always will be such predicates 
\(A\)
that can be understood and evaluated in naïvely.

Reeb supposed that Fermat’s conjecture
provided such a predicate \(A\),
writing,
for example,
in 1989:
\begin{quotation}\label{Fermat}
Consider the unique object
\(a\)
in
\(\numnat\)
defined by the formula
(not well formalised,
but the reader will know how to write a perfect formula):
\begin{quote}
If the statement known as Fermat’s Great Theorem
is true,
\(a=0\),
otherwise
\(a=x+y+z+n\),
where
\(n>3, x^ + y^n = z^n\),
\(x,y,z>1\),
and
\(x,y,z\)
and
\(n\)
chosen such that
\(a\)
is as small as possible.
\end{quote}
At the moment I am writing this,
it is not possible to convince oneself
that
\(a\)
is naïve.
\citep[p.152, trl.~mine]{Reeb1989} 
\end{quotation}
It follows from Wiles’ proof,
published in  1995,
that
\(a=0\);
one may think of Goldbach’s conjecture instead.
At times,
Reeb did not invoke potential examples and limited his motivation 
to pointing out that 
the existence of such predicates
\(A\)
cannot be excluded.%
\footnote{Personal communication from Jean-Michel Salanskis,
who was a member of Reeb’s group.}
Of course,
should a naïve number 
\(n\)
be found such that
\(\neg A(n)\),
one looks for another
predicate of that type.%
\footnote{For Reeb, 
a constructive proof can exist without having been found:
‘ou bien il y a une démonstration constructive, déjà connue ou non …’
\citep[section 16]{Reeb.Harthong1989}.
For Brouwer,
on the contrary,
the only sense in which a proof can be said to exist 
is that it has been constructed.
However,
for the matter at hand
this makes no difference.}

The step to IST is made by 
idealising the distinction
between
the (constructive) naïve numbers
and the 
surplus of formal numbers in
the formal set \(\numnat\),
whose existence is expressed in Q,
to that between the classical standard ones
and the nonstandard ones.%
\footnote{%
\label{istnotunique}This idealisation need not lead to IST;
it was the theory Reeb knew and liked,
but closely related axiomatic nonstandard theories have been developed in the meantime
\citep{Kanovei.Reeken2004}.
Just as in the natural sciences,
different theories of the same phenomena in reality
may be developed,
and have different theoretical virtues.}
Given the differences between the notions of
constructive existence in reality and
formal existence in a classical theory,
it is only to be expected that the idealisation
will be one by analogy:
\begin{quotation}
It is now a matter of drawing up a suitable list of 
simple and formal
properties verified 
(or simply suggested)
by the naïve numbers,
and to consider the formal theory
consisting of the statements on this list together with the axioms of classical mathematics.

The theory known by the abbreviation IST
developed by E.~Nelson
realises this program efficiently.
But like every formal theory,
it does not escape observation Q
(i.e.,
‘The naïve numbers do not fill the standard whole numbers of IST’).
\citep[p.287, trl.~mine]{Reeb1979}
\end{quotation}
To illustrate that last remark:
the number
\(a\)
defined in terms of Fermat’s Last Theorem
exists classically
and is unique,
and hence,
as was clear also before Wiles’ demonstration, 
a standard object in IST
(Theorem~\ref{thm-uniqueness}, page \pageref{thm-uniqueness}).

In
‘\french{La mathématique non standard vieille de soixante ans?}’,
Reeb presents a beginning of such a list of properties of the naïve numbers,
which,
somewhat abbreviated,
runs as follows
\citep[pp.278–279]{Reeb1979}:%
\footnote{As Salanskis emphasises \citep[p.140]{Salanskis1999}, 
Reeb writes ‘properties’, not ‘theorems’,
so as to distinguish assertions about reality from provable formulas in a formal system.}\vspace{\baselineskip}

\noindent
Let 
\(ω\)
be a fixed,
non-naïve number.

\begin{itemize}
\item
Property 1. If
\(a\)
is naïve,
then
\(ω > a\).

\item
Property 2.
\( \dots, ω-a, \dots, ω-2, ω-1, ω, ω+1, ω+2, \dots, ω+a, \dots\)
(where
\(a\)
is naïve)
are non-naïve elements of \(\numnat\).
Likewise,
the following elements of
\(\numnat\)
are non-naïve
(where
\(a\)
is naïve):
\(ω^2,
ω^3,
\dots,
ω^a,
\dots,
\left[\tfrac{ω}{a}\right],
\sqrt[\raisebox{1pt}{\scriptsize\textit{a}}]{ω} 
\)
where
\(
[x]
\)
stands as usual for the whole part of
\(x\);
\(p_ω\),
prime number and 
\(p_ω > ω\)
(such 
\(p_ω\)
certainly exist).
The number
\( ω! \)
is not naïve and has
every naïve whole number
as divisor.

\item
Property 3.
If
\(a > 1\)
is naïve,
then
\(a^ω > ω^a\).

\item
Property 4. There exists no set
\(X\)
such that
‘\(x \in X\)’
is equivalent to
‘\(x\) is a naïve whole number’.

\item
Property 5. Let
\(X \subset \numnat\)
be a set such that 
\(n \in X\)
for every naïve 
\(n\)
[respectively,
\(ω \in X\)
for every non-naïve 
\(ω \)].
Then
there exists a non-naïve
\(α\)
such that
\(α \in X\)
[respectively,
a naïve
\(a\)
such that
\(a \in X\)].

\item
Property 6.
If
\(X\)
is a set of which every element is a naïve whole number,
then 
\(X\)
is finite.
\end{itemize}

Reeb’s argument for Property 4 is that,
if the naïve whole numbers 
formed a set,
then by induction this set would
be identical to
\(\numnat\),
and that would contradict
Slogan Q.

If in this list one replaces 
‘(non-)naïve number’
with
‘(non-)standard number’,
and construes these statements not contentually
but formally,
one gets  theorems of IST.

Reeb also notices 
(current)
limitation of this motivation
\citep[p.287]{Reeb1979}: 
\begin{enumerate}

\item He should like to have a notion of 
‘naïve object’ that extends beyond natural numbers.
In the formal counterpart,
IST,
the predicate
‘standard’
can  be meaningfully 
applied to any set,
hence to any object in its universe
(and thereby yield either a truth or a falsehood);
but Reeb does not have a correspondingly general
notion of construction.
(Note that Reeb was aware of,
but does not embrace,
Brouwer’s wider notion of constructivity.)

\item He has not been able to find a justification
for the claim that whenever all naïve whole numbers
have a certain internal property,
all numbers in
\(\numnat\)
have it.
(An analogue to Transfer.)

\item Likewise,
while for a given naïve
function such as 
\(e^x\)
it is easily shown that
an infinitesimal increase in the argument
leads to an infinitesimal increase in the value,
it remains to be shown that this is equivalent to 
(\(ε-δ\)-)continuity of the function,
which would require an analogue to Standardization.

\end{enumerate}

But Transfer and Standardization 
are,
in their full generality, 
by and large nonconstructive;
see  the fine-grained analysis by Sanders in 
\citealt{Sanders2017}.
Although Sanders’ analysis is concerned with relations of the formal standard objects
with the nonstandard ones,
and not with Reeb’s naïve objects,
his results strongly suggest that such justifications as Reeb hoped to find
will not be forthcoming.
That is far from saying that his attempt to find a natural way into
IST fails;
but it does mean that the idealisations involved in
moving from the distinction between naïve and non-naïve numbers
to IST are stronger than perhaps was expected.%
\footnote{This last consideration is one among several that leads to the question
of constructive analogues to IST
(which was not a particular concern to Reeb,
to whom,
on the contrary,
the idea of using a classical formal theory was attractive).
For this,
I refer to the papers mentioned in footnote~\ref{constructivepapers}.}

To make the transition from the naïve numbers in reality 
to the standard numbers in IST more explicit,
Reeb introduced a middle term,
‘Naïve’ 
(\citealt[pp.453–454]{Reeb1981}; \citealt[pp.153–154]{Reeb1989}). 
This term applies to all objects 
whose existence 
in ZFC is established by proving
a formula of the form
\[
\exists!x A(x)
\] 
This includes all naïve whole numbers,
but much more:
\(\numnat\), 
\(\numrat\),  
\(\numreal\), 
\(P(\numnat)\), 
\(\text{exp}\), 
\(\text{sin}\), 
\(π\), 
…
The key principle then would be: ‘There exists a finite set 
\(F\) 
that contains every
Naïve
object.
One could even decide to do nonstandard analysis
using this informal concept.
Reeb remarks that,
on the one hand,
the advantage of doing this is allow one to reconstruct
non-standard analysis in a way of which
the consistency and conservativeness are evident;
on the other hand,
as an informal concept
Naïve
may be more difficult to work with than Nelson’s formal theory
\citep[p.154]{Reeb1981}.
Moreover,
it would require quite a sophisticated argument to justify the key principle
\citep[p.154]{Reeb1989}.
His conclusion is that
‘in this sense,
the formalized theory IST
is superior to our consideration of Naïve objects,
whose interest is limited to the didactical or heuristic sphere’
\citep[p.154]{Reeb1981}.
It seems to me that,
as a motivation for the introduction of a distinction in an idealised formal theory,
a good heuristic will fit the bill.

As is clear from
the list of four
‘insights’,
the idea that one may simultaneously accept 
(not just finitary but even)
intuitionistic mathematics as
true and formalized classical mathematics as consistent was
shared by Brouwer,
who at the time was even optimistic about the prospects
of a formal consistency proof.
Yet, 
Brouwer would not have called non-standard analysis in the form of IST
an idealised formal theory of the mathematical reality 
that is the intuitive continuum.
After all,
IST
is
a syntactical enrichment of 
ZFC and in particular of the theory of the classical real numbers,
but is not an ontological enrichment of the latter.
The objects of IST are the classical real numbers.
However,
those can not be construed as idealisations of intuitionistic choice sequences.
This is clearest from the mathematical contrast
provided by
Brouwer’s strong counterxamples,
which
show that the intuitive continuum
of mathematical reality,
analysed in terms of choice sequences,
has properties that in classical analysis 
with its discrete continuum
are contradictory.
(As we saw,
IST is, 
on the contrary,
conservative over ZFC.)
Illustrative are the following:
\begin{theorem}
\(\neg \forall x\in \numreal(x \in  \numrat \vee  x \not\in  \numrat)\) \textup{\citep{Brouwer1927B}}
\end{theorem}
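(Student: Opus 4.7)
The plan is to derive a contradiction from the supposition by invoking Brouwer's continuity principle, which is also the engine behind the strong counterexamples cited earlier in the paper (compare Theorem~\ref{Brouwerstrongcounterexample}). The strategy is to convert the assumed disjunction into a genuine characteristic function of \(\numrat\) inside \(\numreal\), and then to block that function by the fact that the intuitionistic continuum does not admit a nontrivial continuous map to a two-element discrete space.

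First, from the hypothesis \(\forall x \in \numreal(x \in \numrat \vee x \not\in \numrat)\) I would define by case analysis a total function \(\chi \colon \numreal \to \{0,1\}\) sending rationals to \(0\) and irrationals to \(1\). Second, I would apply the continuity principle: every function from \(\numreal\) to \(\numnat\) is pointwise continuous, so \(\chi(0)\) is already fixed by some finite initial segment of any Cauchy representative of \(0\); in particular \(\chi\) is locally constant at \(0\). Third, I would exhibit in every neighbourhood of \(0\) both a rational (e.g.\ \(0\) itself, or \(2^{-k}\)) and an irrational (e.g.\ \(2^{-k}\sqrt{2}\)) on which \(\chi\) takes the incompatible values \(0\) and \(1\), contradicting that local constancy.

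The step I expect to be delicate is the second one: one must check that the form of the continuity principle being invoked genuinely applies to \(\chi\), which is presented via a case split on an assumed disjunction rather than as a directly given functional. In the Kleene--Vesley framework, where a real is a choice sequence of nested rational intervals and WC-N applies to any total operation on such sequences, this is automatic; in a weaker setting one would first reformulate \(\chi\) as a functional on Cauchy representatives and only then descend to point cores. An alternative route, closer to Brouwer's own template for strong counterexamples such as Theorem~\ref{Brouwerstrongcounterexample}, would encode \([0,1]\) as a fan, extract from the assumed uniform decidability and the fan theorem a finite modulus that purports to decide rationality from a bounded initial segment, and then obtain the contradiction directly from the density of \(\numrat\) in the irrationals, since no such finite amount of information can separate the two species.
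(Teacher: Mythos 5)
The paper offers no proof of this theorem: it is stated purely as a citation of \citealt{Brouwer1927B}, as one of the strong counterexamples showing that the intuitive continuum, analysed via choice sequences, has properties that are classically contradictory. So there is nothing in the paper to compare your argument against; judged on its own terms, it is the standard derivation and is essentially correct. The clean form of your steps 1--2 is to avoid reifying \(\chi\) as a function altogether: from the hypothesis one has \(\forall \alpha\,\exists n\bigl((n=0 \wedge r(\alpha)\in\numrat) \vee (n=1 \wedge r(\alpha)\notin\numrat)\bigr)\) for representatives \(\alpha\) (choice sequences of nested rational intervals with value \(r(\alpha)\)), weak continuity for numbers then fixes the disjunct for a representative of \(0\) on a finite initial segment, and that initial segment determines a metric neighbourhood of \(0\) containing both \(0\) and an irrational such as \(2^{-k}\sqrt{2}\) admitting representatives with that same initial segment --- which is the contradiction of your step 3. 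The delicate point is exactly the one you flag, namely that the continuity principle lives at the level of sequences and one passes to point cores by extensionality; once that is said, the argument closes. Your concluding alternative via the fan theorem is in fact closer to Brouwer's own route in \citealt{Brouwer1927B}, where the unsplittability of \([0,1]\) falls out of the uniform continuity theorem; but for the present statement uniformity is not needed --- pointwise continuity suffices --- and the phrase ``assumed uniform decidability'' should read ``assumed pointwise decidability'', uniformity being what the fan theorem would add, as it does in the genuinely uniform counterexample of Theorem~\ref{Brouwerstrongcounterexample}.
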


\begin{theorem}
\(\neg \forall x\in \numreal(\neg \neg  x > 0 \rightarrow  x > 0)\)  \textup{\citep{Brouwer1949A}}
\end{theorem}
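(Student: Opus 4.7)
The statement can only be non-trivial once one reads $x > 0$ here as the measurable (positive) order $x \gro 0$, not the weak negative order defined just before Theorem~\ref{Brouwerweakcounterexample}: distributing double negation through $x > 0 \equiv \neg(x \klo 0) \wedge \neg(x = 0)$ and using $\neg\neg\neg P \leftrightarrow \neg P$ already yields $\neg\neg(x > 0) \leftrightarrow x > 0$ for the weak order, which would make the claim trivially false. Under the measurable reading, the statement asserts the strong failure of a Markov-type principle for real numbers, and my plan is to adapt the strong-counterexample technique of Theorem~\ref{Brouwerstrongcounterexample} (\citealt{Brouwer1949A}) by routing the assumption through a fan argument on the Cantor spread.

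Step one reduces the assumption to a Markov-type principle for binary choice sequences. For each $α \in 2^{\numnat}$ I would define a real $x_α$ by the rational approximations
\[
x_α(n) = \min(2^{-n},\ 2^{-K(α,n)}),
\]
where $K(α,n)$ is the least $k < n$ with $α(k) = 1$, or $n$ if no such $k$ exists. The uniform estimate $|x_α(n) - x_α(m)| \leq 2^{-\min(n,m)}$ holds for every $α$, so each $x_α$ is a real number. A direct check yields $x_α \gro 0 \leftrightarrow \exists k\, α(k) = 1$, while $\neg(x_α \gro 0)$ forces $α(k) = 0$ for every $k$; together these give $\neg\neg(x_α \gro 0) \leftrightarrow \neg\forall k\, α(k) = 0$.

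Step two specialises the assumption to this family and yields $\forall α \in 2^{\numnat}(\neg\forall k\, α(k) = 0 \to \exists k\, α(k) = 1)$, that is, Markov's principle restricted to $\{0,1\}$-valued choice sequences. Step three refutes this using Brouwer's weak continuity principle: an assignment of a witness $k$ to every such $α$ must depend only on a finite initial segment; but a choice sequence that begins with $0$s and whose later behaviour is left open satisfies $\neg\forall k\, α(k) = 0$ (committing to all-zero would close off a branch still open) while no initial segment of it determines a witness, which is the familiar contradiction.

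The main obstacle is the right-to-left direction of the equivalence in step one, i.e.\ showing $\neg(x_α \gro 0) \to \forall k\, α(k) = 0$ uniformly in $α$: the approximations $\min(2^{-n}, 2^{-K(α,n)})$ are designed so that the constant-$0$ trajectory of $x_α$ is small enough to rule out any measurable positive lower bound, but verifying this without slipping into a case distinction on whether $α$ has a $1$ must be carried out at the level of the rational approximations themselves. A secondary technical point is that step three applies continuity to what is formally only a partial function on the Cantor spread; one handles this as in the proof of Theorem~\ref{Brouwerstrongcounterexample}, by framing the problem over the full spread and extracting the uniform modulus appropriate for the subspread at hand.
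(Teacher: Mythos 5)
Your reading of the statement is the right one: with the negative order $>$ of section~\ref{SectionBrouwerRealnumber} the matrix $\neg(x \klo 0) \wedge \neg(x = 0)$ is a conjunction of negations and hence stable under $\neg\neg$, so the theorem can only concern the constructive order $\gro$; and since $\neg\neg(x \gro 0) \leftrightarrow x > 0$, it is Theorem~\ref{Brouwerstrongcounterexample} in another guise. Steps one and two are also essentially right, modulo a slip: $\min(2^{-n}, 2^{-K(\alpha,n)})$ is identically $2^{-n}$ because $K(\alpha,n) \leq n$, so every $x_\alpha$ as written is the real number $0$; you want $x_\alpha(n) = 2^{-K(\alpha,n)}$ without the $\min$, which does satisfy the Cauchy estimate and gives $x_\alpha \gro 0 \leftrightarrow \exists k\, \alpha(k)=1$. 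The reduction of the assumption to Markov's principle for binary choice sequences is a clean, modern way of isolating what the theorem refutes.

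The genuine gap is step three. Weak continuity for numbers does not refute Markov's principle. WC-N governs total assignments $\forall \alpha \exists n\, A(\alpha,n)$ over a spread, whereas MP has the form $\forall \alpha(\neg\forall k\, \alpha(k)=0 \rightarrow \exists k\, \alpha(k)=1)$, and the species $\{\alpha : \neg\forall k\, \alpha(k)=0\}$ is not a spread --- indeed $\forall \alpha\, \neg\forall k\, \alpha(k)=0$ is simply false (take the lawlike zero sequence), so there is no ``subspread at hand'' to which a uniform modulus could be attached. Nor is this a merely presentational obstacle: Kleene's function realizability validates WC-N, the fan theorem, indeed all of FIM, and (over a classical metatheory) also validates MP, so no continuity principle of that kind can refute it. The work in your parenthetical ``committing to all-zero would close off a branch still open'' is being done not by WC-N but by the open-data property of lawless sequences, or equivalently by the Creating Subject; that is the missing ingredient. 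Brouwer's own demonstration (\citealt{Brouwer1949A}, sketched in the paper after Theorem~\ref{Brouwerstrongcounterexample}) supplies it by correlating, via the Creating Subject, a checking number with each of the problems ``$\alpha \in \numrat$'' for $\alpha \in [0,1]$, so that the assumption yields a \emph{total} decision method on a fan, to which the fan theorem then applies to extract the impossible uniform initial-segment bound. So either route your step three through the theory of lawless sequences (where $\neg\forall k\, \alpha(k)=0$ does hold and open data plus density refute $\forall \alpha\, \exists k\, \alpha(k)=1$), or adopt the Creating-Subject-plus-fan-theorem argument; as it stands, the refutation of MP --- which is the entire content of the theorem --- is not established.
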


\begin{theorem}
\(\neg \forall x\in \numreal(x \neq  0 \rightarrow  x < 0 \vee  x > 0)\) \textup{\citep{Brouwer1949B}}
\end{theorem}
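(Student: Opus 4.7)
Following the pattern sketched in the excerpt for the strong counterexample Theorem~\ref{Brouwerstrongcounterexample}, the strategy is to assume the universal statement, parametrise a signed Creating-Subject construction over a fan, and let the fan theorem produce a contradiction.

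Assume for contradiction that \(\forall x\in\numreal(x\neq 0\rightarrow x<0\vee x>0)\). For each \(\alpha\) in a fan \(J\) coinciding with \([0,1]\), I would construct a real \(r_\alpha\) in the manner of the paragraph above Weak counterexample~\ref{Brouwerweakcounterexample}: for a proposition \(P_\alpha\) depending on \(\alpha\) in which evidence for \(P_\alpha\) and evidence for \(\neg P_\alpha\) can each arrive at any stage, set \(r_\alpha(n)=0\) until such evidence appears at some stage \(k\le n\), whereupon \(r_\alpha(n)\) jumps to \(+(1/2)^k\) or \(-(1/2)^k\) according to the direction. Since \(\neg\neg(P_\alpha\vee\neg P_\alpha)\) is a theorem, each \(r_\alpha\) satisfies \(r_\alpha\neq 0\). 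Applying the standing hypothesis to each \(r_\alpha\) then yields \(r_\alpha<0\vee r_\alpha>0\), which by construction means that a positive or negative decision about \(P_\alpha\) has in fact been obtained.

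The set of finite initial segments of \(\alpha\) at which such a decision is manifest forms a bar on \(J\); the fan theorem then produces an \(N\) bounding, uniformly in \(\alpha\), the stage by which the sign of \(r_\alpha\) is settled. Choosing \(P_\alpha\) along the lines Brouwer used in the Theorem~\ref{Brouwerstrongcounterexample} argument -- based on whether \(\alpha\) is rational, with both confirming and refuting evidence coded bipolarly -- completes the contradiction: no finite initial segment of \(\alpha\) can force a sign decision, because any prefix can be continued to both a rational and an irrational extension. The main obstacle is the setup of \(r_\alpha\): engineering a two-sided evidence mechanism for an undecidable \(P_\alpha\) that is continuous enough in \(\alpha\) for the fan theorem to apply, while keeping both \(r_\alpha>0\) and \(r_\alpha<0\) live after every finite prefix. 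The one-sided kick that sufficed in Theorem~\ref{Brouwerstrongcounterexample} must here be replaced by a genuine sign choice, which is presumably the reason Brouwer devoted a separate paper \citep{Brouwer1949B} to the present result.
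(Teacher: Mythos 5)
The paper does not actually prove this statement: it appears only as one of three illustrations of Brouwer's strong counterexamples, with a bare citation of \citet{Brouwer1949B}, so there is no in-text argument to measure yours against. Your overall route --- a two-sided Creating Subject number \(r_\alpha\) for each \(\alpha\) in a fan coinciding with \([0,1]\), with \(P_\alpha\) the proposition `\(\alpha\) is rational', followed by a continuity argument --- is the right reconstruction, and you correctly see why the one-sided kick that serves for Theorem~\ref{Brouwerstrongcounterexample} must be replaced by a genuine sign choice.

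The middle step, however, does not go through as stated. From \(r_\alpha<0\vee r_\alpha>0\) you conclude that `a positive or negative decision about \(P_\alpha\) has in fact been obtained', and you then let the stages at which the sign is `settled' form a bar to which the fan theorem applies. But \(<\) and \(>\) here are the \emph{negative} order relations of section~\ref{SectionBrouwerRealnumber}: \(r_\alpha>0\) unpacks to \(\neg(r_\alpha\klo 0)\wedge r_\alpha\neq 0\), which by the construction yields only \(\neg\neg P_\alpha\), and \(r_\alpha<0\) yields only \(\neg P_\alpha\). So the hypothesis delivers the testability \(\forall\alpha(\neg(\alpha\in\numrat)\vee\neg\neg(\alpha\in\numrat))\), not a decision reached at any finite stage; for an \(\alpha\) with \(\neg\neg P_\alpha\) the sequence \(r_\alpha\) need never actually leave \(0\), so there is no `stage by which the sign of \(r_\alpha\) is settled' for the fan theorem to bound, and the bar you describe is not given `by construction'. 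The repair is to apply a continuity principle not to the Creating Subject's temporal stages but to the selection of the disjunct as a function of \(\alpha\): weak continuity says that which disjunct holds is determined by a finite initial segment of \(\alpha\), and then a single rational point already yields the contradiction, since any initial segment of a rational can be continued to an irrational, for which the second disjunct is false --- exactly the final contradiction you state. With that substitution your argument is essentially Brouwer's; the fan theorem's uniform bound, which the paper invokes only in its sketch of Theorem~\ref{Brouwerstrongcounterexample}, is then not even needed here.
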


\noindent
For Reeb
these theorems are not relevant,
as these they depend on intuitionistic considerations
that go beyond the finitary mathematics
he accepts as mathematical reality.

\section{Weyl and infinitesimal analysis}

Overall,
one suspects that to Brouwer,
the glass that Reeb offers would
have seemed to be half empty,
what with its essential involvement of a formalism
and its limitation of mathematical reality to the finitary.

For Weyl this would probably have been different.
As is well known,
Weyl  acknowledged the
epistemological superiority of 
intuitionism
in pure
mathematics:
\begin{quote}
With Brouwer, mathematics gains the highest intuitive clarity;
his doctrine is the culmination of idealism in mathematics.
\end{quote}
However,
Weyl continues:
\begin{quote}
But with pain the mathematician sees the greater part of his
high-rising theories dissolve into the fog.
\citep[p.24]{Weyl1925}
%
\end{quote}
Weyl had come to convince himself that it is necessary
to abandon the intuitionistic program
because he took it to be a fact that
intuitionistic mathematics is not able to found 
the mathematics required in physics,
whereas
‘mathematics should put itself to the service of the natural sciences’ 
\citep{Weyl1926}.%
\footnote{For a detailed account of that episode,
see for example 
\citealt[section 6.2.1]{Mancosu.Ryckman2002}  
and 
\citealt[section 7]{Tieszen2000b}.
A recent philosophical discussion on constructive mathematics in physics
is \citealt{Ardourel2012}.}
This pushed Weyl towards a formalist foundation
of classical mathematics.%
\footnote{But,
as we have seen
(the four ‘insights’, p.~\pageref{BrouwerInsights}),
Brouwerian intuitionism does not exclude
a formalist foundation
of classical mathematics;
it includes it as a proper part.
However,
it is not the part of intuitionistic mathematics
that is concerned with the development of
contentual mathematics;
and the contentual mathematics that Brouwer
sought to develop is far richer than the
minimum required to get the formalist
foundation going.}

In various physical
contexts it is,
conceptually, 
natural to apply nonstandard analysis,
for example when phenomena are involved at
greatly different scales, 
or
where the difference between the observable and the unobservable
plays a role.%
\footnote{Given the properties of human vision,
even at an everyday scale
infinitesimal analysis can be useful,
as shown by the analysis of the moiré effect in \citealt{Harthong1981}.
Further applications are presented in,
e.g.,
\citealt{Cutland1988},
\citealt{ArkerydCutlandHenson1997},
and
\citealt{LobrySari2008}.
A recent view from a philosopher of science is
\citealt{Wenmackers2016}.}
In a more foundational spirit,
Robert  
notes that,
although tangent vectors can be said to represent infinitesimal displacements at a point in a differentiable manifold,
it would be important to have further analyses of
differentiability
and of continuity,
neither of which can be 
based on the concept
of a differential;
a theory of infinitesimals would be one,
and moreover
supply an algebra of differentials \citep[p.xiv]{Robert1988}.

That last remark brings us to Weyl’s infinitesimal geometry in his theory of spacetime,
and also to the quotation from Weyl with which this paper begun.
For,
as Laugwitz aptly observed,
\begin{quote}
And even if Hermann Weyl declared the infinitesimals to have been eliminated,
his book
\textit{Space-Time-Matter},
widely available in several editions since 1919,
is a perfect example
of infinitesimal mathematics in action.
\citep[p.241, trl.~mine]{Laugwitz1986}
\end{quote}
Weyl’s
infinitesimal geometry
is not,
in fact,
constructive.
The fundamental notions are introduced axiomatically,
instead of being constructed out of basic intuition,
and the proof of uniqueness of Pythagorean metric of 1922 is,
in its dependence on the excluded middle,
classical.%
\footnote{On Weyl’s non-constructive mathematics in physics,
see
\citealt[p.146]{Weyl1922}; 
\citealt[p.7]{Weyl1988};
\citealt[pp.95–97]{Scholz2001}; 
and
\citealt[pp.277, 608–610, 777–778]{Eckes2011}.}

Brouwer,
no less of an idealist than Weyl,
always was a conventionalist about the structure of
physical space:
‘a question of convenience, of taste, or of custom’,
he wrote in 1909,
in a paper to which in a reprint of 1919 he added a note
stating that the general theory of relativity
‘would not affect the conclusions on the theory of knowledge’ he had reached
(\citealt[p.14]{Brouwer1909A}, trl.~\citealt[p.116]{Brouwer1975}; \citealt[p.vi]{Brouwer1919B}, trl.~\citealt[p.120]{Brouwer1975}).

Weyl was well aware of the discrepancy
between his constructivist philosophy of pure mathematics
and his classical practice in mathematical physics.
Did Weyl ever hope to give his infinitesimal
geometry a constructive foundation later
–~Brouwer had taken that attitude towards his own theorems in
classical topology~–, %
\footnote{%
In a retrospective remark of 1920
Brouwer wrote that,
when he had just begun to develop intuitionism,
‘in my contemporary 
philosophy-free mathematical
papers I have frequently also used the old [i.e., non-intuitionistic]
methods, 
trying however to derive only such results
as could be hoped to find,
after the completion
of a systematic construction of intuitionistic set theory,
a place in the new system and claim a value,
perhaps in modified form.’
\citep[p.204, trl.~mine]{Brouwer1920J}}
and can a notion of subjectivity be motivated
that supports,
as Weyl’s philosophical foundation of physics requires,
the idea of 
a subject located in a point and whose intuitive space is of
infinitesimal size%
\footnote{See in particular 
\citealt[pp.~246–248]{Bernard2013},
and Bernard’s instructive,
unpublished manuscript
\
{Bernard}.} 
without,
at the same time,
idealising beyond a notion of subject
appropriate for
constructive mathematics?
To investigate these questions would go beyond the scope of the present
paper.
But if there is no construction allowing to treat infinitesimals as individual objects,
then
to Weyl,
Reeb’s approach,
what with its
combination of a classical formalism for nonstandard analysis 
and a nevertheless intuitionistic epistemology,
might have made the glass seem at least half full.%
\footnote{Palmgren indicates that Nelson’s nonstandard analysis,
which corresponds to part but not all of Robinson’s,
may well lend itself to constructivisation
\citep[p.234]{Palmgren1998}.
Weyl,
on the other hand,
would presumably have been interested in the classical formalism.}

\paragraph{Acknowledgement.}
Earlier versions of this paper were presented at the conference
‘Weyl and the Problem of Space: From Science to Philosophy’,
University of Konstanz,
May 2015,
and at
the workshop
‘Workshop on the Continuum in the Foundations of Mathematics and Physics’,
University of Amsterdam,
April 2017.
I am grateful to the organisers for their invitations,
and to the audiences for their
questions and comments.
I have also benefited from
exchanges with
Julien Bernard
(who also shared his
instructive,
unpublished manuscript
‘New insights on Weyl’s Problem of Space,
from the correspondence with Becker’ with me), 
Dirk van Dalen,
Bruno Dinis,
Mikhail Katz,
Carlos Lobo, 
David Rabouin,
Jean-Michel Salanskis,
Sam Sanders,
Wim Veldman,
and Freek Wiedijk.
Gödel’s shorthand notes on 
the non-Archimedean number system
in Brouwer’s dissertation,
mentioned in footnote~\ref{fngoedel},
were kindly transcribed by Eva-Maria Engelen.
These notes are owned by the Institute for Advanced Study
and kept in the Department of Rare Books and Special Collections
at the Firestone Library,
Princeton University.%
\footnote{Scans are avialable online at 
\url{https://library.ias.edu/godelpapers},
and a full transcription of one notebook (vol.~X)  at
\url{https://halshs.archives-ouvertes.fr/hal-01459188}}


\end{document}